\newcommand{\bR}{{\mathbb R}}
\newcommand{\bZ}{{\mathbb Z}}
\newcommand{\hal}{\mbox{\boldmath{$\alpha$}}}
\newcommand{\hbe}{\mbox{\boldmath{$\beta$}}}
\newcommand{\fs}{\mathfrak{s}}
\newcommand{\De}{\Delta}
\newcommand{\om}{\omega}
\newcommand{\Si}{\Sigma}
\newcommand{\be}{\beta}
\newcommand{\al}{\alpha}
\newcommand{\si}{\sigma}
\newcommand{\ga}{\gamma}
\providecommand{\abs}[1]{\lvert#1\rvert}
\newcommand{\bdd}{\partial}
\newtheorem{thm}{Theorem} % [section]
\newtheorem{theorem/definition}{Theorem/Definition}[section]
\newtheorem{prop}[thm]{Proposition}
\newtheorem{lemma}[thm]{Lemma}
\newtheorem{prob}{Problem}
\theoremstyle{definition}
\newtheorem{deff}{Definition}
\theoremstyle{remark}
\newtheorem{rmk}{Remark}
\DeclareMathOperator{\Int} {Int}
\DeclareMathOperator{\Spin} {Spin}
\begin{document} 

%TITLE
\title{Sutured Floer homology distinguishes between Seifert surfaces}         
\date{\today}       
\author{Irida Altman}
\maketitle

%ABSTRACT
\begin{abstract}  
We exhibit the first example of a knot $K$ in the three-sphere with a pair of minimal genus Seifert surfaces $R_1$ and $R_2$ that can be distinguished using the sutured Floer homology of their complementary manifolds together with the $\Spin^c$-grading.  This answers a question of Juh\'asz.  More precisely, we show that the Euler characteristic of the sutured Floer homology distinguishes between $R_1$ and $R_2$, as does the sutured Floer polytope introduced by Juh\'asz.  Actually, we exhibit an infinite family of knots with pairs of Seifert surfaces that can be distinguished by the Euler characteristic.

\end{abstract}

%---------DOCUMENT BEGINNING--------------

\section{Introduction}

Let $K$ be an oriented knot in the three-sphere $S^3$.  Then $K$ is the oriented boundary of at least one  connected compact oriented surface in $S^3$ called a Seifert surface for $K$.  Two Seifert surfaces $R_1$ and $R_2$ of a knot are considered to be {\it equivalent} if they are ambient isotopic in the knot complement.  There are a number of invariants that provide obstructions to two Seifert surfaces being equivalent; possibly the first two that come to mind are the genus of the surface and the fundamental group of the surface complement.  In general, any invariant of the surface complement offers an obstruction to the equivalence of $R_1$ and $R_2$.  Given a Seifert surface $R$, the complement $S^3(R):=\nolinebreak S^3 \setminus \Int (R \times I)$ together with the curve $\bdd R \times \{1/2\}$ on the boundary is a type of 3-manifold called a {\it balanced sutured manifold}.  Therefore, it is reasonable to investigate the possibility of using {\it sutured Floer homology}, an invariant of balanced sutured manifolds introduced by Juh\'asz \cite{Ju06}, to distinguish between equivalence classes of Seifert surfaces.  

Sutured Floer homology associates to a given balanced sutured manifold $(M,\ga)$ a finitely generated bigraded abelian group denoted by $SFH(M,\ga)$.  The group $SFH(M,\ga)$ is graded by the relative $\Spin^c$ structures $\fs \in \Spin^c(M,\ga)$, and has a relative $\bZ_2$ grading.  The support of sutured Floer homology gives rise to the {\it sutured Floer polytope}  $P(M,\ga)$, defined in \cite{Ju10a}, which is a polytope in $H^2(M, \bdd M;\bR)$.  

Suppose $R$ is any minimal genus Seifert surface for a knot in $S^3$.  Then Juh\'asz showed that $SFH(S^3(R))$ is a knot invariant  \cite{Ju08}; that is,  the top term of {\it knot Floer homology} \cite{OS04b, Ra03} is isomorphic to the sutured Floer homology of the complement:
\[ 
SFH(S^3(R)) \cong \widehat{HFK}(K,\mbox{genus}(R)).
\]
As the isomorphism is in terms of ungraded abelian groups, it is interesting to ask whether the extra structure, given by the $\Spin^c$ grading of $SFH(S^3(R))$, enables sutured Floer homology to distinguish between two minimal genus Seifert surfaces.  

\begin{prob} \cite[Problem 2]{Juprob} \label{prob}
Is there a knot $K$ in $S^3$ that has two minimal genus Seifert surfaces $R_1$ and $R_2$ that can be distinguished using $SFH(S^3(R_i))$ together with the $\Spin^c$-grading?  Is there an example where the sutured Floer homology polytopes of $S^3(R_1)$ and $S^3(R_2)$ are different?
\end{prob}

Until now research has provided evidence to suggest that the answer to both question is no.  For example, the first obvious place to investigate these ideas are small knots.  Indeed, in  \cite[Ex.\,8.6]{FJR10} the authors compute $SFH(S^3(R))$ for $R$ ranging through the minimal genus Seifert surfaces for knots with less than 10 crossings.  These small knots have either a unique minimal genus Seifert surface, or all of their minimal genus Seifert surfaces can be identified with Murasugi sums of bands.  However, Juh\'asz showed that the sutured Floer homology of the complement of a Murasugi sum is the tensor product of the sutured Floer homology of the complement of each summand \cite[Cor.\,8.8]{Ju08}.  It is immediate from \cite[Prop.\,5.4]{Ju10a} that the relative $\Spin^c$ grading of the tensor product is independent of how the surfaces were summed.   Thus, all surfaces arising from Murasugi sums (and even dual Murasugi sums) of the same summands cannot be distinguished even by the $\Spin^c$-graded sutured Floer homology group. 

The aim of this article is to give an affirmative answer to both questions posed in Problem \ref{prob} by exhibiting examples of the phenomena.  Our examples come from a family of knots that were studied by Lyon \cite{Lyon}; see Figure \ref{Lyons figure}.  Indeed, we show that even the Euler characteristic $\chi SFH$ of sutured Floer homology  distinguishes between two Seifert surfaces for each of these knots.  

\begin{thm} \label{thm}
There are infinitely many knots with the property that each knot has two minimal genus Seifert surfaces $R_1$ and $R_2$ such that
\[
\chi SFH(S^3(R_1)) \not \sim \chi SFH(S^3(R_2)).
\]
Moreover, for at least one of these knots the sutured Floer polytopes $P(S^3(R_1))$ and $P(S^3(R_2))$ are such that there exists no affine isomorphism of $H^2(M,\bdd M;\bR)$ taking one polytope to the other.
\end{thm}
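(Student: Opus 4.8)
The plan is to reduce everything to a computation of Reidemeister--Turaev torsion and to run it on Lyon's family. By the decategorification theorem of Friedl, Juh\'asz and Rasmussen \cite{FJR10}, for any balanced sutured manifold $(M,\ga)$ with $b_1(M)>0$ the Euler characteristic $\chi SFH(M,\ga)\in\bZ[H_1(M)]$ agrees, up to sign and multiplication by an element of $H_1(M)$, with the sutured torsion $\tau(M,\ga)$; this is precisely the indeterminacy encoded in the relation $\sim$ (together with the conjugation symmetry $h\mapsto h^{-1}$, which also preserves $\tau$ up to units). For a minimal genus Seifert surface $R$ of genus $g$ one has $H_1(S^3(R))\cong\bZ^{2g}$, so $\chi SFH(S^3(R_i))$ is a Laurent polynomial in $2g$ variables, and the first assertion of the theorem follows once $\tau(S^3(R_1))$ and $\tau(S^3(R_2))$ are computed and shown to be inequivalent for every member of Lyon's family.

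First I would make the family explicit: for each $n$, take the knot $K_n$ of Figure~\ref{Lyons figure} together with its two minimal genus Seifert surfaces $R_1=R_1(n)$ and $R_2=R_2(n)$, recalling from \cite{Lyon} that $R_1$ and $R_2$ are not ambient isotopic in $S^3\setminus K_n$ although they bound the same knot. Note that by \cite{Ju08} the groups $SFH(S^3(R_1))$ and $SFH(S^3(R_2))$ are abstractly isomorphic, both being $\widehat{HFK}(K_n,g)$, so any distinction must come from the relative $\Spin^c$-grading. Next, from Lyon's picture of $R_i$ I would extract a deficiency-one presentation of $\pi_1(S^3(R_i))$ --- for instance from a handle decomposition of $S^3\setminus\Int(R_i\times I)$, or a Wirtinger-type presentation read off the diagram --- and compute $\tau(S^3(R_i))$ by Fox calculus along the abelianization $\pi_1(S^3(R_i))\twoheadrightarrow H_1(S^3(R_i))=\bZ^{2g}$. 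Since Lyon's construction is governed by a small number of twisting parameters, the outputs should be explicit Laurent polynomials in $n$; the comparison then reduces to a combinatorial check --- on Newton polytopes, or failing that on coefficient multisets --- that no automorphism of $\bZ^{2g}$, sign, or conjugation carries $\tau(S^3(R_1(n)))$ to $\tau(S^3(R_2(n)))$, which yields $\chi SFH(S^3(R_1(n)))\not\sim\chi SFH(S^3(R_2(n)))$ and the infinite family.

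For the final sentence I would specialise to the simplest member $K_{n_0}$ and upgrade the Newton-polytope statement to one about the sutured Floer polytopes themselves. The extra ingredient is a \emph{no-cancellation} statement for $K_{n_0}$: in each relative $\Spin^c$ structure of its support the group $SFH(S^3(R_i))$ is concentrated in a single $\bZ_2$-grading, so $\abs{\chi SFH(S^3(R_i))(\fs)}=\rank SFH(S^3(R_i))(\fs)$ for all $\fs$. Then the support of $SFH(S^3(R_i))$ equals that of $\chi SFH(S^3(R_i))$, and, after the centering and rescaling built into Juh\'asz's normalisation in \cite{Ju10a}, $P(S^3(R_i))$ is affinely isomorphic to the Newton polytope of $\chi SFH(S^3(R_i))$; since the latter two polytopes are not affinely isomorphic for $n_0$, no affine isomorphism of $H^2(M,\bdd M;\bR)$ takes $P(S^3(R_1))$ to $P(S^3(R_2))$. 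I would get the no-cancellation input either from a sutured Heegaard diagram adapted to $R_i$ on which a positivity or area argument kills the differential, or from $SFH(S^3(R))\cong\widehat{HFK}(K,g)$ \cite{Ju08} together with a direct computation of $\widehat{HFK}(K_{n_0},g)$ exhibiting thinness in the relevant gradings; a more direct, if costlier, alternative is to compute the full $\Spin^c$-graded group $SFH(S^3(R_i))$ for $n_0$ and read $P$ off from it.

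The main obstacle is the torsion comparison, and specifically making it uniform in $n$: because $R_1(n)$ and $R_2(n)$ bound the same knot, their complements agree on everything insensitive to the $\bZ^{2g}$-grading --- the total rank of $SFH$, the one-variable Alexander polynomial of $K_n$, and so on --- so the whole argument rests on the finer placement in $H^2(M,\bdd M;\bZ)$ of the support of the multivariable torsion, and tracking this across an entire family (starting from a workable presentation of $\pi_1(S^3(R_i))$) is where the real work lies. The secondary difficulty is the no-cancellation claim behind the polytope statement, since the $\bZ_2$-grading on $SFH$ is invisible to the torsion and has to be controlled separately, either diagrammatically or via an independent $\widehat{HFK}$ computation.
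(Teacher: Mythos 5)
Your outline matches the paper's proof: the first claim is established exactly as you describe, by computing $\tau(S^3(R_i))$ for Lyon's knots $K_n$ via Fox calculus from a geometrically balanced presentation of $\pi_1$ (the paper gets a particularly clean one, $\langle a,b,x \mid x^3=a^2b^2\rangle$ and a free group, from the genus-one Heegaard torus carrying the knot rather than from a Wirtinger presentation) and then checking that no affine isomorphism of $H_1$ carries one support onto the other for any $n$. For the polytope statement the paper supplies precisely the no-cancellation input you identify as the key difficulty, via the cheapest of the routes you list: for $K_{-1}$ the two Seifert surfaces are disjoint, so the knot exterior splits into two genus-two sutured handlebodies, each of which admits a product disk decomposition to a sutured solid torus $T(2,1;2)$ resp.\ $T(3,4;2)$; Juh\'asz's solid-torus computation and tensor-product formula then give total rank $2\cdot 3=6$, which equals the $\ell^1$-norm of the torsion, forcing the support of $SFH$ to coincide with that of $\chi SFH$ and reducing the polytope comparison to the $1{:}4$ versus $1{:}2$ side-length ratios of the two parallelograms.
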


Here the symbol `$\not \sim$' is used to mean the negation of an appropriate equivalence relation (see end of Section \ref{prelims}). 

\begin{figure}[h]
\centering
\includegraphics [scale=0.5]{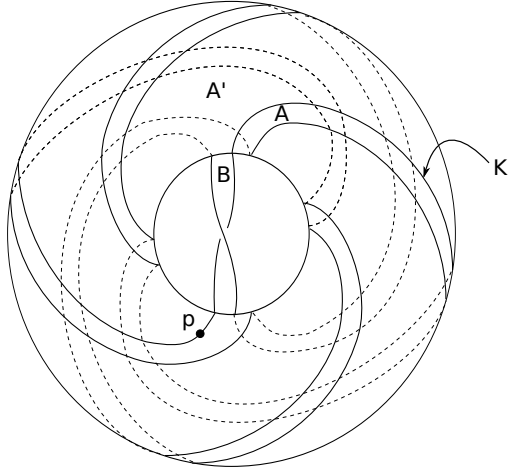}
\caption{One of the knots studied by Lyon \cite[Fig.\,1]{Lyon}.}
\label{Lyons figure}
\end{figure}

We prove the first statement of Theorem \ref{thm} by applying the work of Friedl, Juh\'asz and Rasmussen in \cite{FJR10}, where they give a way of finding the Euler characteristic using Fox calculus. Let $(M,\ga)$ be a balanced sutured manifold.  Then after an identification of $\Spin^c(M,\ga)$ with $H_1(M;\bZ)$ (see subsection 2.2 for more details), the Euler characteristic $\chi SFH(M,\ga)$ can be identified with a type of Turaev torsion polynomial denote by $\tau(M,\ga)$ \cite[Sec.\,3]{FJR10}.  Here the {\it sutured torsion} $\tau(M,\ga)$ is a well-defined element of the group ring $\bZ[H_1(M)]$ up to multiplication by units of the group ring. The sutured torsion has similar properties to that of the classical Alexander polynomial, and so $\tau(M,\ga)$ can be thought of as the generalisation of the Alexander polynomial to sutured manifolds.

For the second statement of Theorem \ref{thm}, we compute the sutured Floer homology and polytopes for one particular knot and two of its Seifert surfaces. Firstly, for each knot that we study, the considered Seifert surfaces $R_1$ and $R_2$ are disjoint.  Moreover, the two sutured manifolds, $X$ and $Y$, obtained by cutting the knot exterior along $R_1$ and $R_2$ are handlebodies of genus two.  Secondly, we are able to find a particular knot, for which there is disk decomposition of $X$ and $Y$ along a product disk.  The latter gives us a way of explicitly computing the sutured Floer homology groups $SFH(X)$ and $SFH(Y)$ (see Propositions \ref{product decomp} and \ref{torus prop}).  Then the groups $SFH(S^3(R_1))$ and $SFH(S^3(R_2))$ are the tensor product $SFH(X) \otimes SFH(Y)$ \cite[Prop.\,8.6]{Ju08}, where the $\Spin^c$ grading can be derived from the appropriate Mayer-Vietoris maps on the level of the first homology groups \cite[Prop.\,5.4]{Ju10a}.  Observe that this method allows us to compute the top term of knot Floer homology of a rather complicated knot --- something which would be significantly harder to do directly.

Lastly, it is important to note that in order to solve Problem \ref{prob}, in Theorem \ref{thm} we use only the $\Spin^c$ grading of the sutured Floer homology.  Therefore, our result is independent of any auxiliary data, in comparison to the work of Hedden, Juh\'asz, and Sarkar  \cite{HJS08}, who show the nonequivalence of two Seifert surfaces $R_1$ and $R_2$ of the knot $8_3$ using sutured Floer homology methods together with properties of the Seifert form. They prove that there is no map $\sigma \colon \Spin^c(S^3(R_1)) \to \Spin^c(S^3(R_2))$ that induces an isomorphism of sutured Floer homology groups for every relative $\Spin^c$ structure, and that is compatible with an isomorphism  $H_1(S^3(R_1)) \to H_1(S^3(R_2))$ which preserves the Seifert form.   
 
Section \ref{prelims} covers some preliminary definitions and explains the method for computing the Euler characteristic via Fox calculus.  Section \ref{example} contains the computations and proof of Theorem \ref{thm}.
\\ \\
\noindent {\it Acknowledgements.} I would like to thank my Ph.D. adviser Stefan Friedl  for many helpful discussions and suggestions.  I am very grateful to Andr\'as Juh\'asz for highlighting interesting questions related to sutured Floer homology, and for pointing out errors in earlier drafts of this work.  I thank Saul Schleimer for stimulating conversations and his interest in my work. 

I owe much gratitude to the University of Warwick and the Warwick Mathematics Institute for generously supporting me through a Warwick Postgraduate Research Scholarship.

%%%%%%%%%%%%%%%%%%%%%%%%%%%%%%%%%%%%%%%%
%%%%%%%%%%%%%%%%%%%%%%%%%%%%%%%%%%%%%%%%
%%%%%%%%%%%%%%%%%%%%%%%%%%%%%%%%%%%%%%%%
\section{Preliminaries} \label{prelims}

To begin with, let us set up some conventions.  Given a space $X$, we denote by $H_*(X)$ the homology group with integer coefficients $H_*(X;\bZ)$.  Further, we write $\chi(X)$ to mean the Euler characteristic $\chi(H_*(X))$.  Lastly, for $K$ a submanifold of $M$ denote by $N(K)$ a regular neighbourhood of $K$ in $M$.

%%%%%%%%%%%%%%%%%%%%%%%%%%%%
\subsection{Sutured manifolds}
The notion of a sutured manifold $(M,\ga)$ was first defined by Gabai \cite{Gabai}.  Here we give a less general definition that is suited to thinking about a particular class of so-called {\it balanced} sutured manifolds defined by Juh\'asz \cite{Ju06}.  

\begin{deff} 
A {\it sutured manifold} $(M,\ga)$ is a compact oriented 3-manifold $M$ with boundary, together with a set $s(\ga)$ of oriented and pairwise disjoint simple closed curves in $\bdd M$ called {\it sutures}, which satisfy two conditions.
The first condition is that each component of $\bdd M$ must contain at least one suture.  Fix a neighbourhood $\ga$ of the sutures in $\bdd M$ that consists of a pairwise disjoint collection of annuli.  The second condition is that every component $R$ of the surface $\bdd M \setminus \Int(\ga)$ must be orientable in such a way that the induced orientation on each component of $\bdd R$ represents the same homology class as the corresponding suture in $H_1(\ga)$.
\end{deff}

Let $R(\ga)$ be the exterior of the sutures in the boundary of $M$, that is, $R(\ga):=\bdd M \setminus \Int (\ga)$.  Now each component of $R(\ga)$ has two orientations: one induced by the orientation of $M$, and one compatible with the orientation of the sutures.  Denote by $R_+(\ga)$ the set of components of $R(\ga)$ on which the two orientations match, and denote by $R_-(\ga)$ the set of remaining components.

\begin{deff}
A sutured manifold $(M,\ga)$ is said to be {\it balanced} if it has no closed components and if there is an equality of Euler characteristics $\chi (R_+(\ga))=\chi (R_-(\ga))$.
\end{deff}

\begin{rmk}
Our definition of a balanced sutured manifold is equivalent to that of Juh\'asz \cite[Def.\,2.1]{Ju06}.
\end{rmk}

In particular, given a Seifert surface $R$, the complement $S^3(R)$ is a balanced sutured manifold with a single suture $s(\ga):=\bdd R \times \{\frac{1}{2}\}$ and a single annular neighbourhood $\ga:=\bdd R \times I$. We refer to $(S^3(R),\ga)$ as the sutured manifold {\it complementary} to $R$.  Actually, since $R_+(\ga)$ consists of one component only, $S^3(R)$ is {\it strongly balanced} \cite[Def.\,3.5]{Ju08}.   A balanced sutured manifold $(M,\ga)$ is  strongly balanced if  for each component $F$ of $\bdd M$, we have the equality $\chi (F \cap R_+(\ga)) = \chi (F \cap R_-(\ga))$ \cite[Def.\,3.5]{Ju08}.  The fact that $S^3(R)$ is strongly balanced becomes relevant later, as the sutured Floer polytope is only defined for strongly balanced sutured manifolds.

Next, we describe an operation on sutured manifolds that leaves the sutured Floer homology unchanged. Suppose $(M,\ga)$ is a balanced sutured manifold, and $D$ is a properly embedded disc in $M$, such that $\abs{D \cap s(\ga)}=2$ and $\bdd D \cap \ga$ consists of essential arcs.  Choose a regular neighbourhood $N(D):=D \times [0,1]$ such that $\bdd D \times [0,1] \subset \bdd M$.  Denote by $D_+:=D \times \{0\}$ and $D_-:= D \times \{1\}$.  Then the {\it product decomposition} of $(M,\ga)$ along $D$ is an operation on $M$ which results in another balanced sutured manifold $(M',\ga')$ defined by 
\begin{gather*}
M':=M \setminus D \times (0,1), \\
\ga':=(\ga \cap M) \cup \left(N(D_+)\cap R_-(\ga)\right) \cup \left(N(D_-) \cap R_+(\ga)\right).
\end{gather*}
We use product decomposition in the proof of Theorem \ref{thm}, and we denote it by 
\[
(M,\ga) \leadsto^D (M',\ga').
\] 
 \begin{prop} \cite[Lemma\,9.13]{Ju06} \label{product decomp}
 Suppose $(M,\ga)$ is a balanced sutured manifold, and there is a product decomposition $(M,\ga) \leadsto^D (M',\ga')$.  Then $SFH(M,\ga)=SFH(M',\ga')$.
 \end{prop}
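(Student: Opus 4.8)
The statement to prove is Proposition~\ref{product decomp}: if $(M,\ga)$ is balanced and $(M,\ga)\leadsto^D(M',\ga')$ is a product decomposition along a disc $D$ with $|D\cap s(\ga)|=2$, then $SFH(M,\ga)\cong SFH(M',\ga')$. Since this is quoted as \cite[Lemma\,9.13]{Ju06}, the plan is to reproduce Juh\'asz's argument, which proceeds by choosing a sutured Heegaard diagram adapted to the decomposing disc and showing the chain complexes literally coincide.

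First I would recall that a product decomposition along $D$ is the simplest instance of a surface decomposition in the sense of Gabai, so one route is to invoke the general decomposition theorem (Juh\'asz's surface-decomposition formula for $SFH$) and observe that for a product disc the set of ``outer'' $\Spin^c$ structures that survive is everything, giving an isomorphism rather than just a summand. However, the cleaner and more self-contained approach — and the one that matches the cited Lemma 9.13 — is direct: build a balanced sutured Heegaard diagram $(\Si,\hal,\hbe)$ for $(M,\ga)$ in which the decomposing disc $D$ appears as a product region. Concretely, after isotoping $D$ to be disjoint from the $\al$- and $\be$-curves except along a controlled collar, cutting $M$ along $D$ corresponds on the Heegaard surface level to cutting $\Si$ along an arc $\de=\Si\cap D$ whose endpoints lie on $\bdd\Si$, with no $\al$- or $\be$-curve crossing $\de$. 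The resulting diagram $(\Si',\hal,\hbe)$ — with the same curves — is a balanced diagram for $(M',\ga')$; the hypotheses $|D\cap s(\ga)|=2$ and $\bdd D\cap\ga$ essential are exactly what guarantee that the cut surface still meets every boundary component in sutures the right way, i.e. that $(M',\ga')$ is again balanced.

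The heart of the proof is then that the two diagrams have \emph{identical} Floer chain complexes. The generators of $CF(\Si,\hal,\hbe)$ and $CF(\Si',\hal,\hbe)$ are the same finite set of intersection tuples (cutting along $\de$ removes no intersection points, since no curve crosses $\de$). For the differential one must check that the moduli spaces of Whitney discs agree: any holomorphic disc contributing to the differential has a domain which is a $2$-chain in $\Si\setminus(\hal\cup\hbe)$, and I would argue that such a domain never has nonzero multiplicity along the region adjacent to $\de$, so it descends to a domain in $\Si'$, and conversely. This uses that $\de$ runs between two distinct boundary components of $\Si\setminus(\hal\cup\hbe)$ (boundary coming from $\ga$), so the region containing $\de$ touches $\bdd\Si$ and hence has multiplicity zero in any periodic-domain-free differential-contributing class. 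One also checks the relative $\Spin^c$ grading is preserved under the natural identification $\Spin^c(M,\ga)\cong\Spin^c(M',\ga')$ coming from $H^2(M,\bdd M)\cong H^2(M',\bdd M')$, since the disc $D$ is null-homologous rel boundary.

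The main obstacle is the bookkeeping in step two and three: making precise that $D$ can be isotoped into ``product position'' with respect to \emph{some} Heegaard diagram, and then verifying rigorously that the domain-count argument goes through — in particular handling potential periodic domains and confirming admissibility is preserved by the cut. I expect no genuinely new difficulty here beyond Juh\'asz's original treatment; since the excerpt permits citing this as a known lemma, in the paper itself it would suffice to state that the proof is \cite[Lemma\,9.13]{Ju06} and that the product decomposition is the relevant special case, reserving the Heegaard-diagram argument above as the conceptual justification.
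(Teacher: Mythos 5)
Your proposal is correct and matches what the paper does: the paper offers no proof of its own but simply cites \cite[Lemma\,9.13]{Ju06}, and your sketch faithfully reproduces Juh\'asz's argument there (a sutured Heegaard diagram in which $D\cap\Si$ is an arc $\de$ disjoint from all $\hal$- and $\hbe$-curves, so that cutting along $\de$ leaves the generators and the differential unchanged because the region containing $\de$ meets $\bdd\Si$ and hence has multiplicity zero in every domain counted by the differential). No gap; for the purposes of this paper the citation alone suffices.
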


Product decomposition is a useful operation when computing the sutured Floer homology of a specific sutured manifold.  In particular, in the proof of Theorem \ref{thm}, we have handlebodies of genus two with a single suture, and each of the handlebodies can be product decomposed into a solid torus with two sutures on the boundary. The sutured Floer homology of $S^1 \times D^2$ with any collection of sutures is already known; see Proposition \ref{torus prop}.

%%%%%%%%%%%%%%%%%%%%%%%%%%%%
\subsection{Relative $\Spin^c$ structures and the sutured Floer polytope} \label{subsec polytope}

Every balanced sutured manifold $(M,\ga)$ has an associated space of relative $\Spin^c$ structures $\Spin^c(M,\ga)$; we define relative $\Spin^c$ structures in the following paragraph.  For each $\fs \in \Spin^c(M,\ga)$ there is a well-defined abelian group $SFH(M,\ga,\fs)$ \cite{Ju06}, and the direct sum of these groups forms the {\it sutured Floer homology} of $(M,\ga)$.  That is,
\[
SFH(M,\ga)=\bigoplus_{\fs \in \Spin^c(M,\ga)} SFH(M,\ga,\fs).
\]
Juh\'asz computed the sutured Floer homology of $(M,\ga)$ when $M$ is the solid torus.  We use this in the proof of Theorem \ref{thm} to compute the polytopes.  Let $T(p,q;n)$ be the balanced sutured manifold $(M,\ga)$, where $M$ is a solid torus, and the sutures are $n$ parallel $(p,q)$ torus knots.  Here $p$ denotes the number of times the curve on $\bdd M$ goes around in the longitudinal direction.  Note that $n$ has to be even.

\begin{prop} \cite[Prop.\,9.1]{Ju10a} \label{torus prop}
Suppose that $T(p,q;n)$ is as described above, and suppose that $n=2k+2$, for some nonnegative integer $k$.  Then there is an identification 
\[
\Spin^c(T(p,q;n)) \cong \bZ
\]
such that the following holds
\[
SFH(T(p,q;n),i) \cong 
\begin{cases}
\bZ^{\binom{k}{\lfloor i/p \rfloor}}, & \textrm{if } 0 \leq i < p(k+1); \\
0 , & \textrm{otherwise.}
\end{cases}
\]
\end{prop}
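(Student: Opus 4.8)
The plan is to compute $SFH(T(p,q;n))$ directly from an explicit sutured Heegaard diagram, reading off both the ranks and the relative $\Spin^c$ gradings from the combinatorics of the diagram; the binomial coefficients will emerge from the way the generators are organised. A preliminary reduction cuts down the number of cases: the self-diffeomorphism $(\theta,z) \mapsto (\theta, e^{im\theta}z)$ of $S^1 \times D^2$ carries a system of $n$ parallel $(p,q)$ curves on the boundary to a system of $n$ parallel $(p,q+mp)$ curves, so up to diffeomorphisms under which $SFH$ is invariant we may assume $0 \le q < p$; in particular the answer will depend on $p$ and $k$ only, as the statement asserts.

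Next I would build the diagram. Writing $M = S^1 \times D^2$, the surface $R_-(\ga)$ is a disjoint union of $k+1$ annuli; thickening it and reattaching $k$ one-handles and $k$ two-handles recovers $M$ (a count of Euler characteristics and of first homology pins down these numbers), and the attaching circles of those handles cut out a sutured Heegaard surface $\Sigma$ with $\bdd \Sigma = s(\ga)$ carrying $k$ curves $\al_1,\dots,\al_k$ and $k$ curves $\be_1,\dots,\be_k$. The essential feature is that one $\al$--$\be$ pair winds around the core of the solid torus with the slope dictated by $q/p$, so that these two curves meet in $p$ points rather than $1$, while all other intersections are kept minimal. I would then isotope the diagram (by the usual finger moves and windings) so that it is admissible and so that its differential vanishes --- e.g. by making it nice in the sense of Sarkar--Wang and checking that no empty embedded bigon or rectangle connects distinct generators. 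Granting this, $SFH(T(p,q;n))$ is the free abelian group on the set of generators $\mathbf{x} = \{x_i \in \al_i \cap \be_{\rho(i)}\}$, with $\rho$ a bijection.

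It remains to count and grade the generators. After a short analysis of the diagram the generators turn out to be indexed by pairs $(J,\ell)$, where $J \subseteq \{1,\dots,k\}$ records which ``handles'' are active and $\ell \in \{0,\dots,p-1\}$ records a residue coming from the winding slope; there are $\binom{k}{j}$ sets $J$ of size $j$, so the total rank is $p\sum_{j=0}^{k}\binom{k}{j} = p\,2^k$. For the grading I would use the identification $\Spin^c(M,\ga) \cong H^2(M,\bdd M) \cong H_1(M) \cong \bZ$ together with the standard formula $\underline{\fs}(\mathbf{x}) - \underline{\fs}(\mathbf{y}) = \PD[\ep(\mathbf{x},\mathbf{y})]$, where $\ep(\mathbf{x},\mathbf{y}) \in H_1(M)$ is the difference $1$-cycle assembled from sub-arcs of the $\al$- and $\be$-curves. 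The point is that the elementary move which adds one handle to $J$ runs such an arc once around the core and so raises the class by $p \in \bZ$, while varying the residue $\ell$ sweeps through $p$ consecutive classes; hence a generator indexed by $(J,\ell)$ with $|J| = j$ lies in $\Spin^c$ class $jp + \ell$, and the number of generators in class $i$ is $\binom{k}{\lfloor i/p \rfloor}$ when $0 \le i < p(k+1)$ and $0$ otherwise. Fixing a base generator pins down the isomorphism $\Spin^c(T(p,q;n)) \cong \bZ$.

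The step I expect to be the main obstacle is producing a Heegaard diagram that is simultaneously admissible, has vanishing differential, and has a $\Spin^c$-grading transparent enough to read off directly; once such a diagram is in hand the remainder is book-keeping. The recursion $\binom{k}{j} = \binom{k-1}{j-1} + \binom{k-1}{j}$ makes that book-keeping especially clean: it expresses the fact that adjoining a pair of parallel sutures to the solid torus tensors $SFH$ with a rank-two group supported in relative gradings $0$ and $p$, so the whole computation can be run by induction on $k$ from the base case $k = 0$, where $(M,\ga)$ is the product sutured manifold on an annulus when $p = 1$ and is computed from an explicit $p$-generator diagram in general. An alternative to the direct diagrammatic argument would be to realise this same recursion through Juh\'asz's surface decomposition theorem, decomposing $T(p,q;2k+2)$ along a suitable annulus so that $SFH(T(p,q;2k))$ appears as the outer summand, and identifying the complementary summand by symmetry.
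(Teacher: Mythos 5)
A preliminary point: the paper itself contains no proof of this proposition --- it is quoted from Juh\'asz \cite[Prop.\,9.1]{Ju10a} and used as a black box in the proof of Theorem~\ref{thm}. So the comparison below is between your proposal and the argument in the cited source, which does proceed as you guess: an explicit sutured Heegaard diagram for $T(p,q;n)$, admissible and with vanishing differential, from which the ranks and the relative $\Spin^c$ grading are read off. Your reduction to $0 \le q < p$ via the twist $(\theta,z)\mapsto(\theta,e^{im\theta}z)$, your grading mechanism via $\underline{\fs}(\mathbf{x})-\underline{\fs}(\mathbf{y})=\PD[\epsilon(\mathbf{x},\mathbf{y})]$, and the recursion $\binom{k}{j}=\binom{k-1}{j}+\binom{k-1}{j-1}$ realised as tensoring with a rank-two group in gradings $0$ and $p$ are all the right ingredients.

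Nevertheless the proposal is not yet a proof, for two reasons. The first you flag yourself: the diagram is never produced. Everything from ``after a short analysis of the diagram the generators turn out to be indexed by pairs $(J,\ell)$'' onward describes the answer rather than derives it, and admissibility together with the vanishing of the differential is precisely where the content of the computation lies. The second is a concrete numerical error that would derail the construction for $p\ge 2$: you cannot rebuild $S^1\times D^2$ from $R_-(\ga)\times I$ (a disjoint union of $k+1$ solid tori whose cores are $(p,q)$ curves) by attaching only $k$ one-handles and $k$ two-handles. After attaching the one-handles, the image of $H_1$ in $H_1(M)\cong\bZ$ is $p\bZ$, because every annulus core represents $p$ times the generator, and attaching two-handles cannot enlarge this image; hence for $p\ge 2$ one needs at least $k+1$ handles of each index, the Heegaard surface has genus one rather than genus zero, and it is the extra $\al$--$\be$ pair carried by that genus that winds with slope $q/p$ and meets in $p$ points. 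Your own bookkeeping secretly requires this: indexing generators by $(J,\ell)$ with $J\subseteq\{1,\dots,k\}$ and $\ell\in\{0,\dots,p-1\}$, giving total rank $p\cdot 2^k$, is a choice from $k+1$ independent $\al$--$\be$ pairs, not $k$. With $d=k+1$ and the winding pair supplying the factor of $p$ spread over $p$ consecutive $\Spin^c$ classes, the remaining combinatorics in your sketch is correct, but the diagram (or, in the alternative route, the precise annulus decomposition and the identification of the outer $\Spin^c$ structures) still has to be exhibited and checked.
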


The following definition of relative $\Spin^c$ structures originates from Turaev's work \cite{Tu90}, but in the current phrasing comes from \cite{Ju06}.  

Fix a Riemannian metric on $(M,\ga)$.  Let $v_0$ denote a nonsingular vector field on $\bdd M$ that points  into $M$ on $R_-(\ga)$ and out of $M$ on $R_+(\ga)$, and that is equal to the gradient of the height function $s(\ga) \times I \to I$ on $\ga$. The space of such vector fields is contractible.

A relative $\Spin^c$ structure is defined to be a {\it homology class} of vector fields $v$ on $M$ such that $v|_{\bdd M}$ is equal to $v_0$.  Here two vector fields $v$ and $w$ are said to be {\it homologous} if there exists an open ball $B \subset \Int(M)$ such that $v$ and $w$ are homotopic on $M \setminus B$ relative to the boundary.  There is a free and transitive action of $H_1(M)$ on $\Spin^c(M,\ga)$ given by {\it Reeb turbulization} \cite[p.\,639]{Tu90}.  This action makes the set $\Spin^c(M,\ga)$ into an $H_1(M)$-torsor.  From now on, we call a map $\iota \colon \Spin^c(M,\ga) \to H_1(M)$ an  {\it affine isomorphism} if $\iota$ is an $H_1(M)$-equivariant bijection.  Note that $\iota$ is completely defined by which element $\fs \in \Spin^c(M,\ga)$ it sends to $0 \in H_1(M)$ (or any other fixed element of $H_1(M)$).

The perpendicular two-plane field $v_0^\perp$ is trivial on $\bdd M$ if and only if $(M,\ga)$ is strongly balanced \cite[Prop.\,3.4]{Ju08}.  Suppose that $(M,\ga)$ is strongly balanced.  Let $t$ be a trivialisation of $v_0^\perp$.  Then there is a map dependent on the choice of trivialisation,
\[
c_1(\cdot, t) \colon \Spin^c(M,\ga) \to H^2(M,\bdd M),
\]
where $c_1(\fs,t)$ is defined to be the relative Euler class of the vector bundle $v^\perp \to M$ with respect to a partial section coming from a trivialisation $t$.  So $c_1(\fs,t)$ is the first obstruction to extending the trivialisation $t$ of $v_0^\perp$ to a trivialisation of $v^\perp$.  Here $v$ is a vector field on $M$ representing the homology class $\fs$.

We now have all the ingredients required to define the sutured Floer polytope.  Let $S(M,\ga)$ be the {\it support} of the sutured Floer homology of $(M,\ga)$.  That is,
\[
S(M,\ga):=\{ \fs \in \Spin^c(M,\ga) \colon SFH(M,\ga,\fs)\neq 0\}.
\]
Consider the map $i \colon H^2(M,\bdd M;\bZ) \to H^2(M,\bdd M;\bR)$ induced by the inclusion $\bZ \hookrightarrow \bR$.  For $t$ a trivialisation of $v_0^\perp$, define
\[
C(M,\ga,t):=\{ i \circ c_1(\fs,t) : \fs \in S(M,\ga)\} \subset H^2(M,\bdd M;\bR).
\]
Then the {\it sutured Floer polytope} $P(M,\ga,t)$ with respect to $t$ is defined to be the convex hull of $C(M,\ga,t)$.  Finally, we have that $c_1(\fs,t_1)-c_1(\fs,t_2)$ is an element of $H^2(M,\bdd M)$ dependent only on the trivialisations $t_1$ and $t_2$ \cite[Lem.\,3.11]{Ju10a}, and therefore we may write $P(M,\ga)$ to mean the polytope in $H^2(M,\bdd M;\bR)$ up to translation.

\begin{rmk} \label{polytope rmk} It is important to note that $c_1$ ``doubles the distances.'' Namely, the map $PD \circ c_1 \colon \Spin^c(M,\ga) \to H_1(M)$ is equal to $2 \iota \colon \Spin^c(M,\ga) \to H_1(M)$, where $\iota$ is an affine isomorphism \cite[5.3.1\,Thm]{Tu90}.  Thus, we can compare two polytopes by comparing the ratios of their side lengths, since the ratios remain the same under affine isomorphisms and doubling.
\end{rmk}

%%%%%%%%%%%%%%%%%%%%%%%%%%%%
\subsection{Sutured torsion}
Each of the groups $SFH(M,\ga,\fs)$ has a relative $\bZ_2$ grading, which is made into an absolute $\bZ_2$ grading by choosing an orientation $\om$ of the vector space $H_*(M,R_-(\ga);\bR)$.  Then,  for every relative $\Spin^c$ structure $\fs$, the Euler characteristic \linebreak $\chi SFH(M,\ga,\fs)$ is well-defined with no sign ambiguity.  Theorem 1 of \cite{FJR10} tells us that the Euler characteristic with respect to the orientation $\om$, denoted by  $\chi SFH(M,\ga,\fs,\om)$, is a function $T_{(M,\ga,\om)} \colon \Spin^c(M,\ga) \to \bZ$ that can be thought of as the maximal abelian torsion of the pair $(M,R_-(\ga))$, in the sense of Turaev \cite{Tu01}.
Fixing an affine isomorphism $\iota \colon \Spin^c(M,\ga) \to H_1(M)$ lets us collect all of these functions into a single generating function
\[
\tau(M,\ga):=\sum_{\fs \in \Spin^c(M,\ga)} T_{(M,\ga,\om)}(\fs) \cdot \iota(\fs).
\]
We refer to $\tau(M,\ga)$ as the {\it sutured torsion} invariant. 

In the case when $(M,\ga)$ is a manifold complementary to a Seifert surface we drop the reference to $\ga$ and write just $\tau(M)$ to mean $\tau(M,\ga)$.  Note that $\tau(M,\ga)$ is an element of the group ring $\bZ[H_1(M)]$, and that it is well-defined up to multiplication by an element of the form $\pm h$, where $h \in H_1(M)$.  We can extend the affine isomorphism $\iota$ linearly to a map on the group rings denoted by the same letter $\iota \colon \bZ[\Spin^c(M,\ga)] \to \bZ[H_1(M)]$. Then
\[
\tau(M,\ga)= \iota(\chi SFH(M,\ga)).
\]

\begin{rmk}
Notice that the abelian group $H_1(M)$ is thought of as a multiplicative group; hence the notion of being well-defined up to multiplication by an element.  Specifically, if $f= \pm h\cdot g$, for elements $f,g$ of the group ring $\bZ[H_1(M)]$, then we use the notation $f \doteq g$.
\end{rmk} 

Finally, let us describe how to compute the torsion $\tau(M,\ga)$ of a given irreducible balanced sutured manifold $(M,\ga)$ with connected subsurfaces $R_\pm(\ga)$. Fix a basepoint $p \in R_-(\ga)$.  Then Proposition 5.1 of \cite{FJR10} tells us how to compute the torsion from the map $\kappa_* \colon \pi_1(R_-(\ga),p) \to \pi_1(M,p)$ induced by the natural inclusion $\kappa \colon R_-(\ga) \hookrightarrow M$.  

First, take a {\it geometrically balanced} presentation of $\pi_1(M,p)$; that is, a presentation 
\[
\pi_1(M,p)=\langle a_1, \ldots, a_m| r_1, \ldots, r_n \rangle,
\]
where the deficiency of the presentation $m-n$ is equal to the genus $g(\bdd M)$ of the boundary of $M$.  

Obtaining a geometrically balanced presentation is not hard. Any balanced sutured manifold  $(M,\ga)$ can be reconstructed in a standard way from a {\it balanced sutured diagram}  $(\Si,\hal,\hbe)$ \cite[Prop.\,2.14]{Ju06}, where $\Si$ is a surface with boundary, and each of $\hal$ and $\hbe$ is a set  containing the same number of pairwise disjoint simple closed curves. To recover $(M,\ga)$, thicken $\Si$ to $\Si \times [0,1]$, regard $\hal$ as curves on $\Si \times \{0\}$, and $\hbe$ as curves on $\Si \times \{1\}$.  Then attach 2-handles along $\hal$ and $\hbe$ to obtain $M$ with sutures $\bdd \Si \times \{1/2\}$.  

Suppose that we picked the orientations so that $R_-(\ga)$ is the component of the boundary on ``the bottom'' that includes the boundaries of the 2-handles attached to $\al$.  Note that the 2-handles attached to $\al$ are precisely the 1-handles attached to $R_-(\ga)$.  Then the generators of the free group $\pi_1(R_-(\ga),p)$ and the cores of the 1-handles attached to $R_-(\ga)$ are a generating set for $\pi_1(M,p)$; the cores of the 2-handles attached to $\hbe$ give the relations of $\pi_1(M,p)$ in these generators.  Therefore, the deficiency of this presentation is equal to the number of generators of $\pi_1(R_-(\ga),p)$: say this number is $l$. Finally, as $M$ is balanced, $l$ is precisely equal to the genus of $\bdd M$. 

Let $\pi_1(R_-(\ga),p):=\langle \si_1, \ldots, \si_l \rangle$.  Then the images of $\si_j$ under the map $\kappa_*$ are words in the generators $a_i$ of $\pi_1(M,p)$.  In later sections, we abuse notation and refer to $\kappa_*(\si_j)$ as $\si_j$.  Now we can form the square matrix of Fox derivatives
\[
\Theta_M:=
\begin{pmatrix}
\varphi \Bigl( \frac{\bdd \kappa_*(\si_j)}{\bdd a_i}\Bigr) &  \varphi \bigl( \frac{\bdd r_k} {\bdd a_i} \bigr)
\end{pmatrix},
\]
where $\varphi \colon \bZ[\pi_1(M,p)] \to \bZ[H_1(M)]$ is the map induced by the abelianization of the fundamental group.
\begin{rmk}
We use the convention that the Fox derivative is computed left-to-right.  For example, take words $u,w \in \bZ[\pi_1(M,p)]$ and apply  the Fox derivative $\frac{\bdd }{\bdd a_i} \colon \bZ [\pi_1(M,p)] \to \bZ[\pi_1(M,p)]$ to $u w$.  Then 
\[
\frac{\bdd(u w) }{\bdd a_i}=\frac{\bdd u} {\bdd a_i} \mbox{aug}(w) + u \frac{\bdd w}{\bdd a_i},
\]
where $\mbox{aug} \colon \bZ[\pi_1(M,p)] \to \bZ$ is the augmentation map.
\end{rmk}

\begin{prop} \label{prop}  \cite[Prop.\,5.1]{FJR10} Let $(M,\ga)$ be a balanced sutured manifold such that $M$ is irreducible and the subsurfaces $R_\pm(\ga)$ are connected.    Then
\[
\tau(M,\ga) \doteq \det \Theta_{M}.
\]
\end{prop}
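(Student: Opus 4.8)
The plan is to combine Theorem~1 of \cite{FJR10} with an explicit chain-level computation. By that theorem $\tau(M,\ga)$ is identified, up to the indeterminacy ``$\doteq$'' (multiplication by $\pm h$, $h\in H_1(M)$), with the maximal abelian torsion of the pair $(M,R_-(\ga))$ with coefficients in $\La:=\bZ[H_1(M)]$; so it suffices to produce a convenient $\La$--chain complex computing that torsion and to recognise its torsion as $\det\Theta_M$. I would realise such a complex as an algebraic mapping cone built from $\kappa$, and then reduce it by a single cancellation to the matrix $\Theta_M$ itself. Throughout I would keep track of the fact that each manipulation alters the torsion by at most a unit $\pm h$, so that all the identifications below are valid ``$\doteq$''.

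First I would fix the models. As $M$ is irreducible with nonempty boundary it is simple-homotopy equivalent to the presentation $2$--complex $X_M$ of the given geometrically balanced presentation $\langle a_1,\dots,a_m\mid r_1,\dots,r_n\rangle$ (with $l:=m-n=g(\bdd M)$), while the connected surface-with-boundary $R_-(\ga)$ is simple-homotopy equivalent to the wedge $X_R$ of $l$ circles carrying the free generators $\si_1,\dots,\si_l$. Because a based map out of a wedge of circles is determined up to based homotopy by its effect on $\pi_1$, any based cellular map $f\colon X_R\to X_M$ with $f_*=\kappa_*$ is based-homotopic, through these equivalences, to $\kappa$; hence the algebraic mapping cone $\mathrm{Cone}(f_\#)$ of $f_\#\colon C_*(X_R;\La)\to C_*(X_M;\La)$ is chain homotopy equivalent over $\La$ to $C_*(M,R_-(\ga);\La)$, and so computes the torsion in question. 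I would now write this cone out using the two standard facts of Fox calculus in the $\La$--cover: the cellular boundary of the $2$--cell attached along a word $w$, and the $1$--chain lifted from a based loop representing $w$, are both $\sum_i\varphi(\tfrac{\bdd w}{\bdd a_i})\,a_i$; and $\sum_i\tfrac{\bdd w}{\bdd a_i}(a_i-1)=w-1$, which is exactly what makes $f_\#$ a chain map. The cone then has the form
\[
0\longrightarrow \La^{m}\xrightarrow{\,D_2\,}\La^{m+1}\xrightarrow{\,D_1\,}\La\longrightarrow 0
\]
in degrees $2,1,0$, where the extra $\La$--summand in degree $1$ is the shifted generator $e$ coming from the $0$--cell of $X_R$, and
\[
D_2=\begin{pmatrix} 1-\varphi\kappa_*(\si_j) & 0\\ \varphi\!\left(\tfrac{\bdd\kappa_*(\si_j)}{\bdd a_i}\right) & \varphi\!\left(\tfrac{\bdd r_k}{\bdd a_i}\right)\end{pmatrix},\qquad D_1=\left(\pm1,\ \varphi(a_1)-1,\ \dots,\ \varphi(a_m)-1\right),
\]
the top row of $D_2$ and the first entry of $D_1$ being the $e$--coordinate.

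Then the reduction is immediate. The $e$--entry $\pm1$ of $D_1$ is a unit of $\La$, so the degree--$1$ generator $e$ cancels the single degree--$0$ generator; this chain-level Gaussian elimination changes the torsion only by a unit, deletes the top row of $D_2$, and --- since nothing remains in degree $0$ --- needs no correction term, leaving the two--term complex
\[
0\longrightarrow\La^{m}\xrightarrow{\,\bar D_2\,}\La^{m}\longrightarrow 0,\qquad \bar D_2=\begin{pmatrix}\varphi\!\left(\tfrac{\bdd\kappa_*(\si_j)}{\bdd a_i}\right) & \varphi\!\left(\tfrac{\bdd r_k}{\bdd a_i}\right)\end{pmatrix}=\Theta_M
\]
(using $l+n=m$; the sign convention in $\mathrm{Cone}$ only affects the overall $\pm$). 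The torsion of a based complex $0\to\La^m\xrightarrow{\bar D_2}\La^m\to0$ is $\det\bar D_2$ provided that $\det\bar D_2$ is a non-zero-divisor, so that the complex becomes acyclic over the total quotient ring of $\La$; and if $\det\Theta_M=0$ the complex is not acyclic there and the torsion vanishes by convention --- still consistent with ``$\doteq\det\Theta_M$''. This gives $\tau(M,\ga)\doteq\det\Theta_M$. As a check, reading off instead the handle structure on $M$ relative to $R_-(\ga)$ supplied by a balanced sutured diagram (thicken $\Si$, attach the $n$ one-handles dual to $\hal$, then the $n$ two-handles along $\hbe$) presents $C_*(M,R_-(\ga);\La)$ directly by the $n\times n$ matrix of Fox derivatives of $r_1,\dots,r_n$ in those $n$ generators; for the presentation in which the $\si_j$ are literally among the $a_i$ the first $l$ columns of $\Theta_M$ are $\bigl(\begin{smallmatrix}I_l\\0\end{smallmatrix}\bigr)$, so $\det\Theta_M$ equals that $n\times n$ determinant, and for a general geometrically balanced presentation one appeals to Tietze-invariance of $\det\Theta_M$ up to $\pm h$, a routine Fox-calculus computation.

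The part I expect to be most delicate is not any one step but the bookkeeping of the indeterminacy: one must verify that the simple-homotopy equivalences $X_M\simeq M$, $X_R\simeq R_-(\ga)$, the cellular model $f$ of $\kappa_*$, the orderings and lifts of cells, and the Gaussian elimination together alter $\det\Theta_M$ by at most $\pm h$ --- equivalently, that the chain homotopy equivalence $\mathrm{Cone}(f_\#)\simeq C_*(M,R_-(\ga);\La)$ carries torsion in $\pm H_1(M)$, with no surviving Whitehead-torsion contribution. This is precisely where the hypotheses enter (irreducibility of $M$ and connectedness-with-boundary of $R_-(\ga)$, the latter making $\pi_1(R_-(\ga))$ free and $X_R$ a wedge of circles), and it is what forces the statement to be made ``$\doteq$'' rather than with equality.
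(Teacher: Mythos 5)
The paper does not prove this statement at all: it is imported verbatim from \cite[Prop.\,5.1]{FJR10}, and the surrounding text of Section 2.3 only sets up the notation ($\Theta_M$, geometrically balanced presentations, the identification of $\chi SFH$ with the maximal abelian torsion via Theorem 1 of \cite{FJR10}). So there is no in-paper argument to compare against; what you have written is a reconstruction of the proof from \cite{FJR10}, and it is essentially the right one. Your starting point (Theorem 1 of \cite{FJR10} identifying $\tau(M,\ga)$ with the torsion of the pair $(M,R_-(\ga))$) matches the paper's own setup, and the core computation is correct: the mapping cone of $C_*(X_R;\La)\to C_*(X_M;\La)$ has the stated shape, the unit entry of $D_1$ cancels the degree-$0$ generator, and the correction term in the resulting two-term complex vanishes precisely because of the Fox identity $\sum_i\frac{\bdd w}{\bdd a_i}(a_i-1)=w-1$ (which kills the $e$-component of $D_2$ after the change of basis $a_i\mapsto a_i\mp(\varphi(a_i)-1)e$), leaving exactly $\Theta_M$. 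The multiplicativity of torsion then gives $\doteq\det\Theta_M$, with the usual convention when the determinant vanishes.

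The one soft spot is your opening claim that $M$ is simple-homotopy equivalent to the presentation $2$-complex of an \emph{arbitrary} geometrically balanced presentation; that is not a standard fact (it runs into Andrews--Curtis-type issues if taken literally). The honest backbone of the argument is the paragraph you relegate to a ``check'': the sutured diagram gives a relative handle decomposition of $(M,R_-(\ga))$ whose cellular chain complex over $\La$ is literally the $n\times n$ Fox matrix of the $\hbe$-relations in the $1$-handle generators, this equals $\det\Theta_M$ for the associated presentation because the $\si_j$-columns form $\bigl(\begin{smallmatrix}I_l\\ 0\end{smallmatrix}\bigr)$, and Tietze invariance of $\det\Theta_M$ up to $\pm h$ extends the identity to any geometrically balanced presentation. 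Promote that to the main line of the proof and drop the unneeded simple-homotopy assertion, and the argument is complete.
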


In particular, Proposition \ref{prop} can be applied in the case of a sutured manifold complementary to a minimal genus Seifert surface of a knot in $S^3$.

Lastly, let us say what it means for two sutured torsion polynomials $\tau_1:=\tau(M_1,\ga_1) \in \bZ[H_1(M_1)]$ and $\tau_2:=\tau(M_2,\ga_2) \in \bZ[H_1(M_2)]$ to be equivalent.  Note that the only relevant choices that we have made is that of the affine isomorphism $\iota_i \colon \Spin^c(M_i,\ga_i) \to H_1(M_i)$, for $i=1,2$.  Therefore, the two sutured torsion polynomials are {\it equivalent} $\tau_1 \sim \tau_2$ if there is an affine isomorphism $\psi \colon H_1(M_1) \to H_1(M_2)$, which extends linearly to a map on the group rings, such that $\psi(\tau_1)\doteq \tau_2$.  Also, we say that $\chi SFH(M_1,\ga_1)$ is {\it equivalent} to $\chi SFH(M_2,\ga_2)$ if $\tau_1 \sim \tau_2$.

%%%%%%%%%%%%%%%%%%%%%%%%%%%%
%%%%%%%%%%%%%%%%%%%%%%%%%%%%
%%%%%%%%%%%%%%%%%%%%%%%%%%%%
\section{The example} \label{example}

Lyon's paper \cite{Lyon} is part of a series of papers in the 70's that aimed to produce examples of knots with nonisotopic Seifert surfaces.  The first few papers by Alford, Schaufele, and Daigle \cite{Alford, AS,Daigle} all give various infinite families of such examples.  Some of these families have readily computable sutured torsion invariants, and it turns out that the sutured torsion does not distinguish between Seifert surfaces in these cases.  However, as we will see in this section, the examples in Lyon's paper can be distinguished by their sutured torsion.

%%%%%%%%%%%%%%%%%%%%%%%%%%%%
\subsection{The knots}
The following construction is taken from \cite[pp.\,1--2]{Lyon}.  Let $k$ be the $(3,4)$ torus knot on the torus $T$.  Let $A$ be a tubular neighbourhood of $k$ on $T$, depicted on Figure \ref{Lyons figure}.  Denote by $A'$ the closure of the complement $T \setminus A$.  The boundary of $A$ has two components; connect these components via the boundary of the twisted strip $B$ as shown in Figure \ref{Lyons figure}.  Define the knot $K$ to be the boundary of $A \cup B$. Note that we can introduce full twists in the strip $B$ to produce an infinite family of knots $K_n$, labelled by the integers, where the strip $B$ of the knot $K_n$ has $2n+1$ half twists. Then Figure \ref{Lyons figure} depicts $K:=K_0$ with one positive half-twist.  The Alexander polynomial of $K_n$ is easily computed to be
\[
\De_{K_n}(t)=(6+12n)t -(11+24n)+ (6+12n)t^{-1}.
\]
Therefore, each knot $K_n$ is nontrivial.  For computational convenience we work with $n \geq -1$, but of course similar computations can be performed for $n<-1$.  The knot $K_{-1}$ is the one for which we are able to show the polytopes statement from Theorem \ref{thm}.

%%%%%%%%%%%%%%%%%%%%%%%%%%%%
\subsection{The Seifert surfaces} Fix a basepoint $p \in K_n$, as in Figure \ref{Lyons figure}.  Observe that $K_n$ bounds two Seifert surfaces $S_n:=A \cup B$ and $S_n':=A' \cup B$; Figure \ref{Seifert surfaces} depicts $S_0$ and $S'_0$.  Let $(Y_n,\ga_n)$ and $(Y_n',\ga'_n)$ be the sutured manifolds complementary to $S_n$ and $S_n'$, respectively.  Note that in both cases $p$ is contained in $K_n$, or more precisely, $p$ is contained in the sutures $s(\ga_n)$ and $s(\ga'_n)$.   From now on we fix an integer $n \geq -1$.  For the remainder of this subsection we drop `$n$' from the subscript in order to avoid cluttered notation.
\begin{figure}[h]
\centering
\includegraphics [scale=0.5]{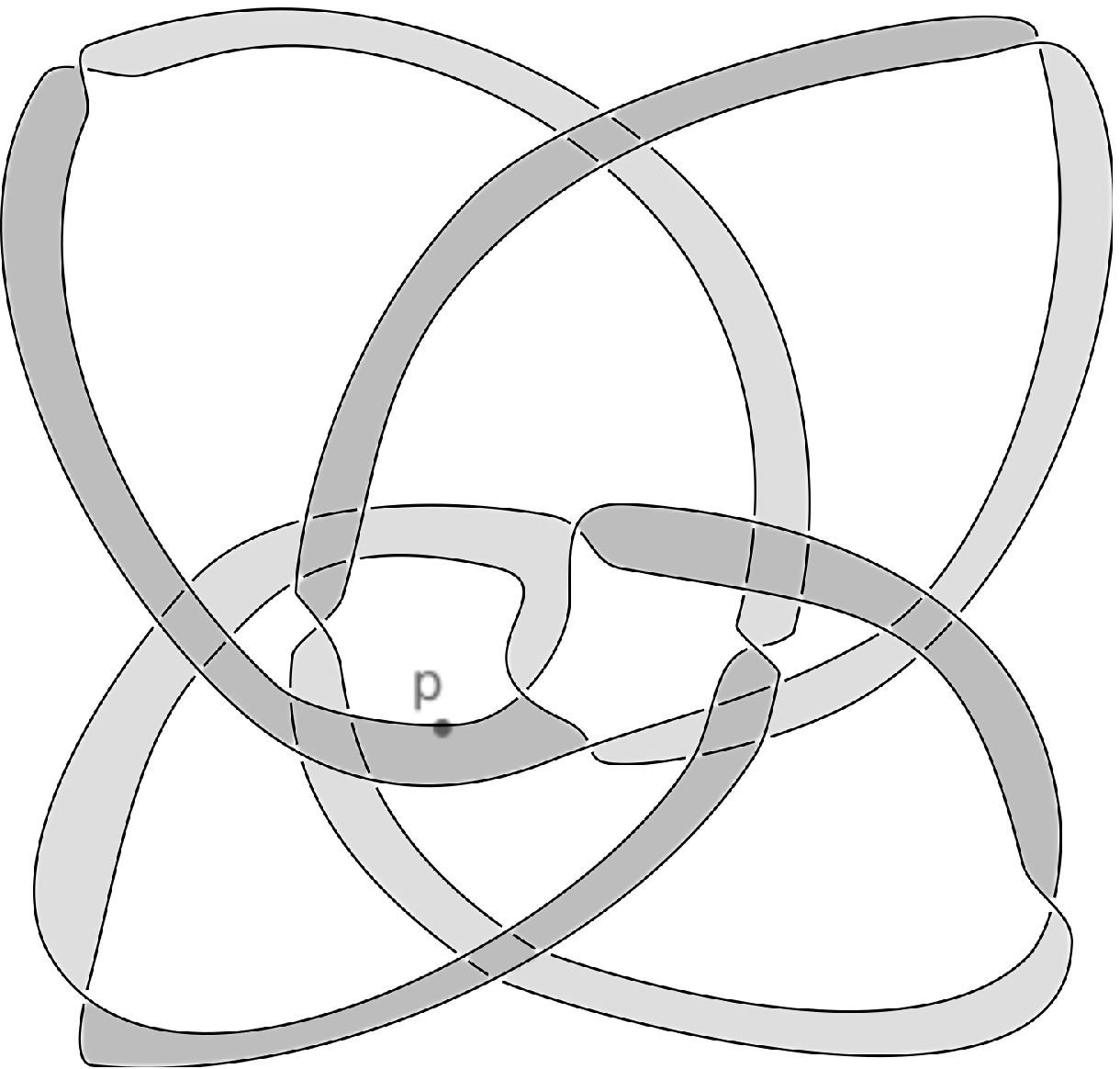}
\hspace{0.5cm}
\includegraphics [scale=0.5]{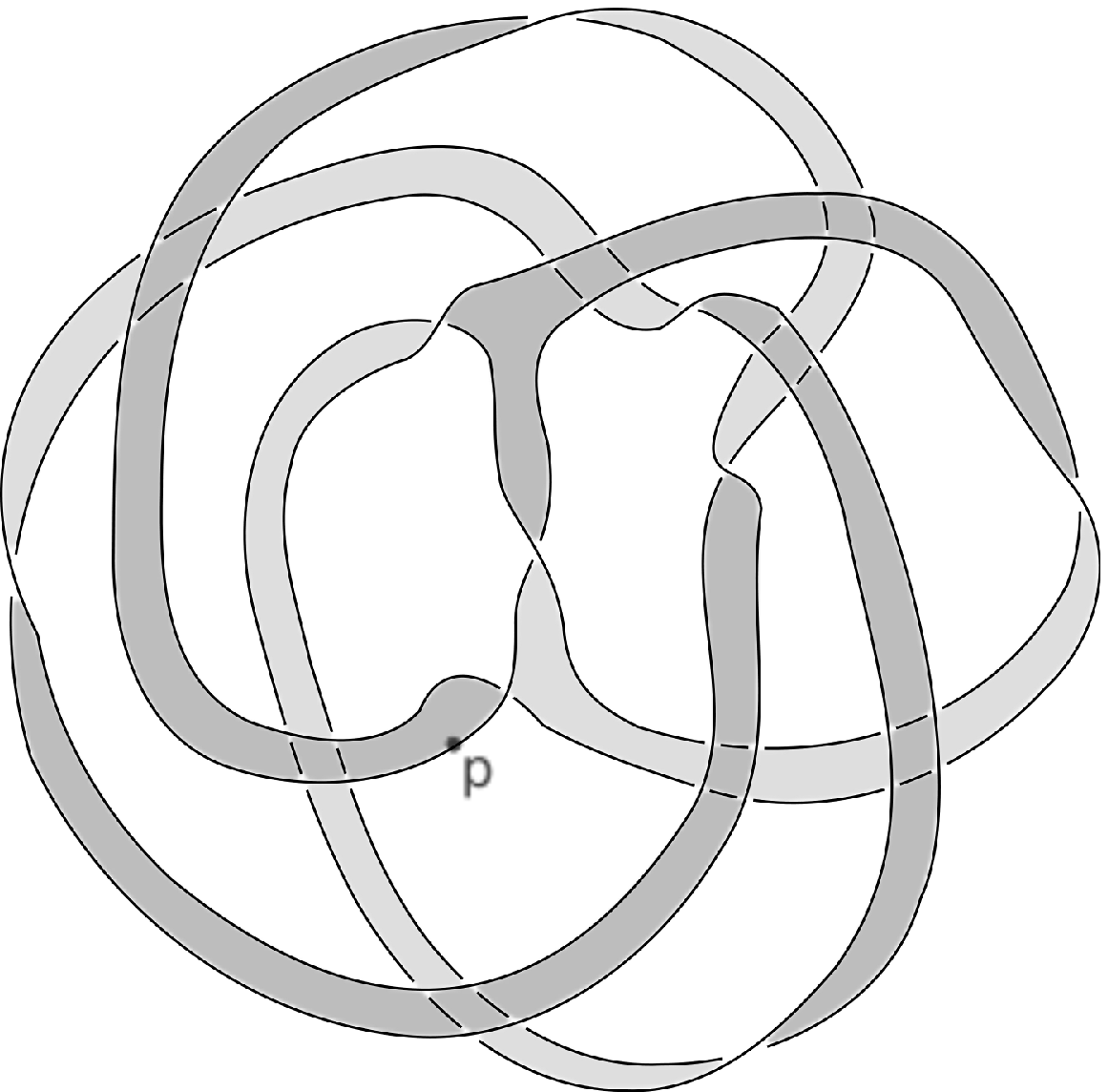}
\caption{The two Seifert surfaces $S_0$ (left) and $S'_0$ (right) for $K_0$.}
\label{Seifert surfaces}
\end{figure}

The torus $T$ gives a genus one Heegaard splitting of $S^3$ into solid tori $U$ and $V$, with $B \subset V$.  This splitting is convenient for computing the fundamental groups $\pi_1(Y,p)$ and $\pi_1(Y',p)$.   From now on, let $V \setminus B$ and $U \setminus A$ stand for the manifolds obtained by removing the appropriate, small (collar) neighbourhoods of $B$ and $A$, respectively. Observe that $V\setminus B$ is a genus two handlebody; let $a$ and $b$ be a generating set of $\pi_1(V\setminus B,p)$ as shown in Figure \ref{Lyoncomplement} (left).  Let $x$ be the generator of $\pi_1(U,p)$, as shown in the same figure.  Figure \ref{Lyoncomplement} (right) shows the discs $D_a$ and $D_b$ that are dual to $a$ and $b$, respectively.  In the remainder of the paper, we compute the homotopy class of a curve in $V \setminus B$ by counting the signed intersections of that curve with the dual discs.
\begin{figure}[h]
\centering
\includegraphics [scale=0.45]{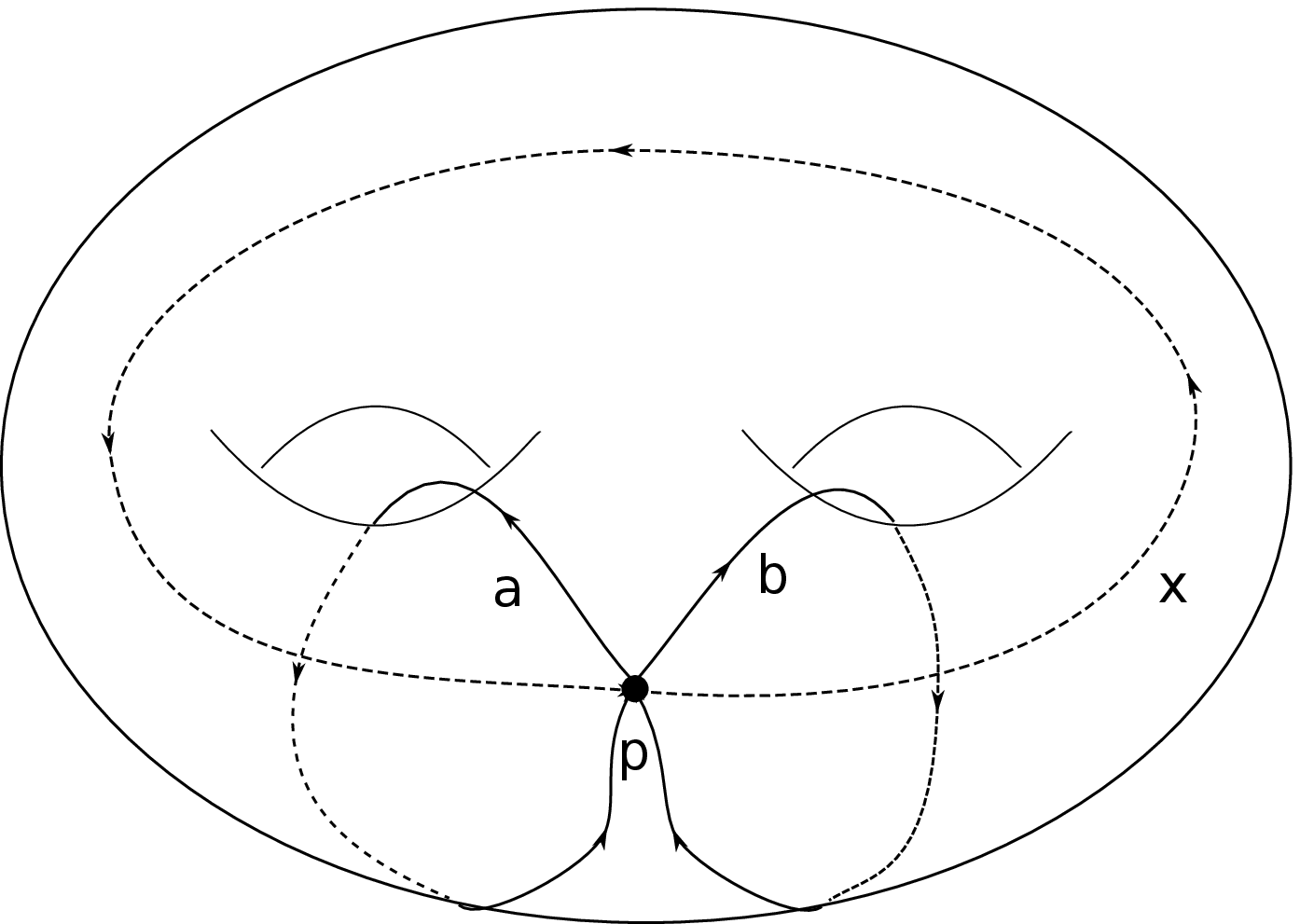}
\hspace{0.5cm}
\includegraphics [scale=0.45]{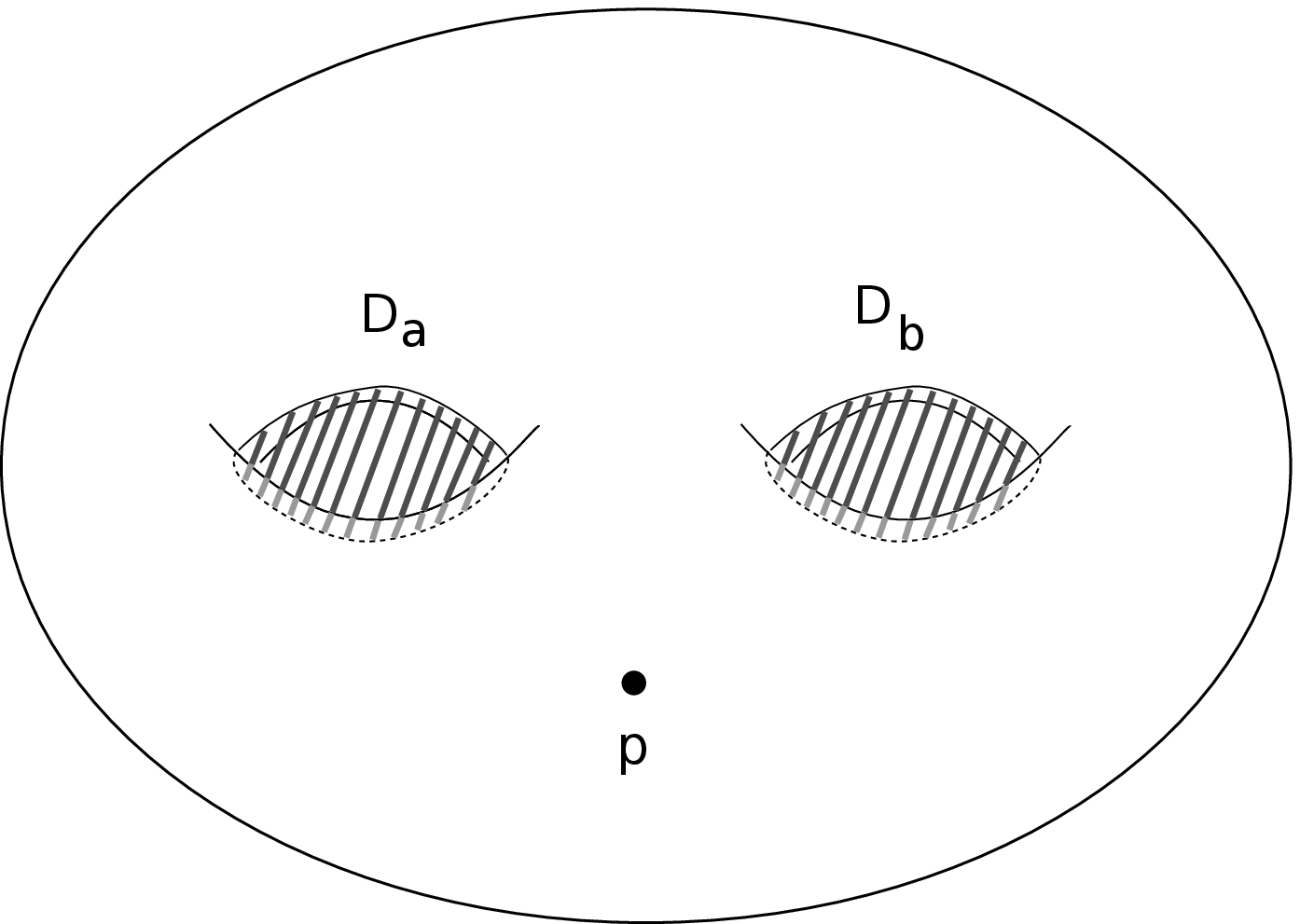}
\caption{Left: The curves $a,b$, and $x$ in the manifolds $Y$ and $Y'$. Right: The dual discs $D_a$ and $D_b$.}
\label{Lyoncomplement}
\end{figure}

In order to compute Fox derivatives, we need to know the fundamental groups of $Y$ and $Y'$.  Note that the following lemma shows that these groups are independent of $n$.
\begin{lemma}
The fundamental groups of the two surface complements have the following presentations:
\begin{align*}
\pi_1(Y,p) \hspace{0.17cm} &=\langle a,b,x| x^3=a^2b^2 \rangle, \\
\pi_1(Y',p)&= \langle x, b \rangle.
\end{align*}
\end{lemma}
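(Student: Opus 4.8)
The plan is to compute each fundamental group by van Kampen, using the genus-one Heegaard splitting $S^3 = U \cup_T V$ adapted to the surfaces. First I would handle $\pi_1(Y',p)$, which should be the easier of the two. Recall $Y' = S^3(S')$ with $S' = A' \cup B$, so $Y'$ is the complement of a neighborhood of the Seifert surface $S'$. Since $B \subset V$ and $A' \subset T$ lies on the Heegaard torus, cutting $S^3$ along $S'$ should leave, on the $U$-side, the solid torus $U$ essentially intact (the part of $S'$ meeting $U$ is a collar of a torus knot on $T$, whose removal from the boundary of $U$ does not change the homotopy type of $U$), contributing the generator $x$; on the $V$-side one gets $V \setminus B$, a genus-two handlebody on generators $a,b$. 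The gluing identifications come from how the boundary circles of the cut surface sit on $T$: after the cut, the two sides are glued along (a neighborhood of) $T \setminus A'$, i.e. along $A$, which deformation-retracts to the curve $k$. Pushing the class of $k$ into $U$ gives $x^{?}$ and into $V\setminus B$ gives a word in $a,b$; since $k$ is the $(3,4)$-torus curve, on the $U$-side it is $x^3$ (reading the longitudinal winding), and I would read off from Figure \ref{Lyoncomplement} that on the $V$-side it becomes $a^2b^2$. For $Y'$ the extra handle structure from $A'$ collapses the $a,b$ generators — more precisely the surface $A'$ itself provides relations killing/identifying $a$ and $b$ in terms of $x$ and $b$ — leaving the two-generator free group $\langle x, b\rangle$. (Indeed $H_1(Y') \cong \bZ^2$ is forced since $Y'$ is a genus-two handlebody, so a free group on two generators is the only possibility once one checks the space is aspherical with free $\pi_1$.)

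For $\pi_1(Y,p)$ with $Y = S^3(S)$, $S = A\cup B$, the roles reverse: now the removed surface meets $V$ in $B$ and meets $T$ in $A$, so the $V$-side contributes $V\setminus B$ with its genuine genus-two handlebody group $\langle a,b\rangle$ (no extra relation, since $B$ is just a band and $V\setminus B$ stays a handlebody), while the $U$-side contributes the solid torus $U = \langle x\rangle$. The two pieces are glued along a neighborhood of $T\setminus A = A'$, which retracts onto the same core curve $k$; van Kampen then gives $\pi_1(Y,p) = \langle a,b\rangle * \langle x\rangle / (w_V = w_U)$ where $w_U = x^3$ and $w_V = a^2b^2$ are the two expressions for $[k]$, i.e. exactly $\langle a,b,x \mid x^3 = a^2b^2\rangle$.

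The step I expect to be the genuine obstacle is \emph{correctly reading the word representing $k$ in each handlebody}, and in particular justifying that $[k] = a^2 b^2$ in $\pi_1(V\setminus B)$ and $[k] = x^3$ in $\pi_1(U)$, together with checking there is really only one relation (i.e. that the gluing region $A'$, resp. $A$, is connected and $\pi_1$-surjects appropriately so van Kampen produces a single relator and no $\bZ$-coefficient ambiguity). Concretely I would: (i) fix the base path and orient $k$ as the boundary behavior dictates; (ii) compute $[k]\in\pi_1(V\setminus B,p)$ by counting signed intersections of $k$ with the dual discs $D_a, D_b$ of Figure \ref{Lyoncomplement}, which is exactly the intersection-number recipe the paper sets up — for the $(3,4)$ curve pushed off the band $B$ this should yield the cyclic word $a^2b^2$; (iii) compute $[k] \in \pi_1(U,p) = \langle x\rangle$ as the longitudinal winding number of the $(3,4)$ torus curve, namely $3$, giving $x^3$; (iv) for $Y'$, additionally track how the disc of $A'$ (a sub-disc of $T$) caps off a curve in $V\setminus B$, producing the relation that eliminates $a$, and verify the surviving presentation is $\langle x,b\rangle$ by an abelianization/rank sanity check against $H_1$. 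Once the two readings of $[k]$ are pinned down, both presentations follow immediately from van Kampen, and their $n$-independence is clear because the band $B$'s twisting changes the embedding $K_n \hookrightarrow S^3$ but not the homeomorphism type of $V \setminus B$ or of the gluing locus.
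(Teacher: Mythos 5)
Your overall strategy --- decompose along the Heegaard torus, note that the intersection of the two pieces is a connected annulus so van Kampen yields exactly one relation, and read the relator off as the two expressions for the core of that annulus in $\pi_1(U)$ and $\pi_1(V\setminus B)$ --- is exactly the paper's approach. But your execution has a genuine error at the one step you yourself flagged as the obstacle: reading the word for the core curve in $V\setminus B$. For $Y'$ the gluing annulus is $A$, and the spine of $A$ represents $bab^2$ in $\pi_1(V\setminus B,p)$, \emph{not} $a^2b^2$. The spine of $A$ and the spine of $A'$ are parallel copies of $k$ on $T$, but they lie on opposite sides of the attaching region of the band $B$, so they are not homotopic in $V\setminus B$ --- they are not even homologous there ($a^2b^2\mapsto 2a+2b$ while $bab^2\mapsto a+3b$). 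This asymmetry is the entire content of the lemma (and ultimately of the paper); assuming both spines read as $a^2b^2$ erases it.

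The error then propagates. With the relation $x^3=bab^2$ the generator $a$ occurs exactly once in the relator, so a Tietze transformation eliminates it ($a=b^{-1}x^3b^{-2}$) and leaves the free group $\langle x,b\rangle$ with no relations --- that is the paper's argument. With your relation $x^3=a^2b^2$ no such elimination is possible, and your patch --- that ``the surface $A'$ itself provides relations killing/identifying $a$'' --- is wrong: $A'$ is part of the removed surface $S'$, not of the gluing locus, so it contributes no relations; van Kampen produces exactly the single relation from the connected annular intersection and nothing more. Your fallback sanity check also fails: $H_1(Y)\cong\bZ^2$ as well (the paper computes $H_1(Y)\cong\bZ\langle a\rangle\oplus\bZ\langle u\rangle$), so rank of $H_1$ cannot distinguish the two cases, and the assertion that $Y'$ is a genus-two handlebody with free $\pi_1$ is essentially the statement to be proved, not an independent input. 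To repair the proof you must actually carry out step (ii) of your plan separately for the spine of $A'$ (getting $a^2b^2$, hence the presentation of $\pi_1(Y,p)$) and for the spine of $A$ (getting $bab^2$, hence, after eliminating $a$, the free presentation of $\pi_1(Y',p)$).
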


 \begin{proof}
View $Y$ as the union of $V\setminus B$ and $U\setminus A$, and then apply Van Kampen's theorem.   In applying Van Kampen's theorem  the only interesting point is what relations come from the intersection $(V\setminus B)\cap(U\setminus A) \cong A'$.  Figure \ref{complement} (left) tells us that the sole relation is $x^3=a^2b^2$, which can be seen by following around the spine of the annulus $A'$ and counting its signed intersections with the dual discs $D_a$ and $D_b$.  So indeed $\pi_1(Y,p)=\langle a,b,x| x^3=a^2b^2 \rangle$.  

Similarly, when computing $\pi_1(Y',p)$, we are interested in what relations come from the intersection $(V\setminus B) \cap (U\setminus A') \cong A$.  Figure \ref{complement} (right) tells us that there is again a single relation: $x^3=bab^2$.    Since $a=b^{-1}x^3b^{-2}$, it follows that $\pi_1(Y',p)\cong \bZ \langle x \rangle * \bZ\langle b\rangle$ .
  
\end{proof}

\begin{figure}[h]
\centering
\includegraphics [scale=0.45]{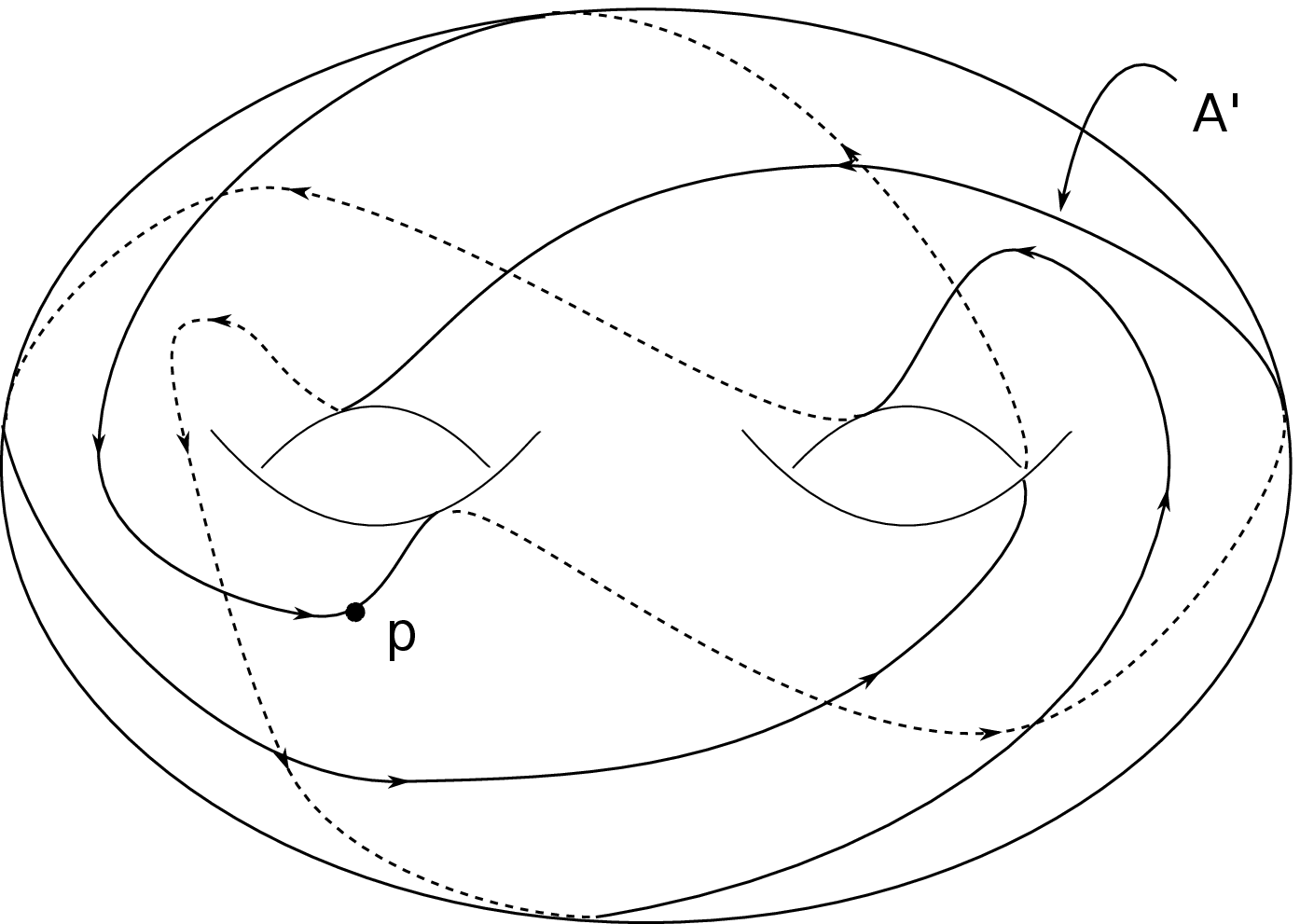}
\hspace{0.5cm}
\includegraphics [scale=0.45]{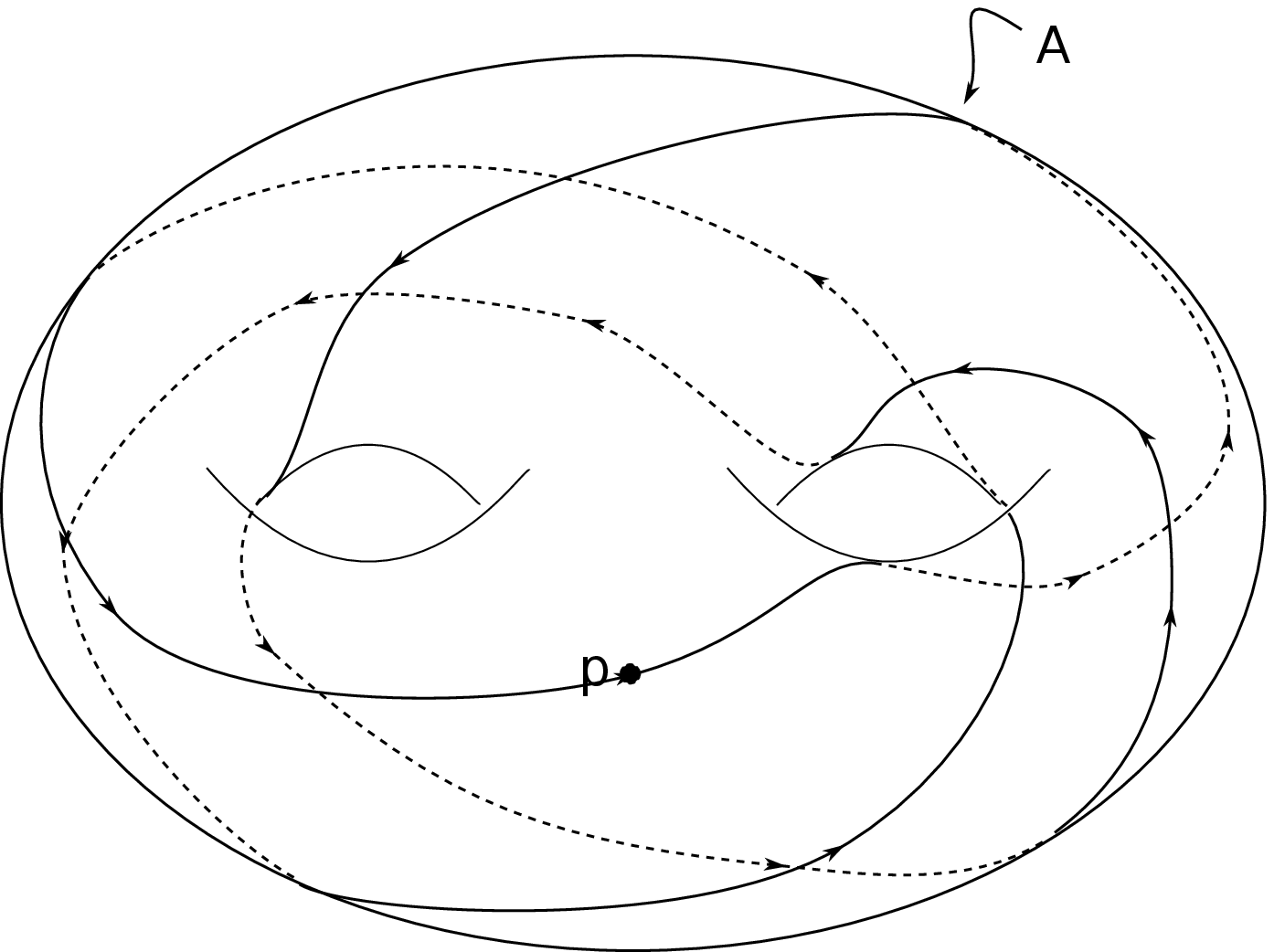}
\caption{Left: spine of $A'$ that gives the relation $x^3=a^2b^2$ in $\pi_1(Y,p)$. Right: spine of $A$ that gives the relation $x^3=bab^2$ in $\pi_1(Y',p)$.}
\label{complement}
\end{figure}  

\begin{rmk} \label{ab} In order to apply Proposition \ref{prop}, we must know explicitly how to abelianize the fundamental groups.  For $\pi_1(Y',p)$, this is clear.  For $\pi_1(Y,p)$, it is convenient to introduce $u:=x^{-1}ab \in \pi_1(Y,p)$. Then, we have $x=u^2$ and $b=u^3a^{-1}$ in homology, so $
H_1(Y;\bZ)\cong \bZ\langle a \rangle \oplus \bZ\langle u \rangle.$
\end{rmk}

\begin{rmk} \label{rmk disjoint} Actually, it can be seen from Figure \ref{Lyons figure} that the surfaces $S$ and $S'$ can be made disjoint in the complement of the knot.  Take two copies of the strip, call them $B$ and $B'$, such that $S=B \cup A$ and $S'=B' \cup A'$. Then $S \cup S'$ form the boundary of a genus-two handlebody, and $S \cap S'=K$.  See Figure \ref{disjoint} for an illustration in the case when $n=0$.  In particular, let $W$ and $X$ be the two handlebodies of the genus two splitting of $S^3$ given by $S \cup S'$, where $W$ is the handlebody on Figure \ref{disjoint} containing the point at infinity. In other words, $W$ can be thought of as $V\setminus B$.  For a particular $n$, note that $W$ and $X$ are sutured manifolds with $K_n$ as their single suture.

The fact that $S$ and $S'$ are disjoint could be used as a shortcut to compute the sutured torsion.  To do so, first compute $\tau(W)$ and $\tau(X)$.  Then use \cite[Prop.\,5.4]{Ju10a} to ``glue'' the two torsion polynomials by Mayer-Vietoris induced maps on the level of homology and so obtain $\tau(Y)$ and $\tau(Y')$. However, we choose not to make use of this shortcut in order to illustrate how Proposition \ref{prop} can be used in a general situation where the two Seifert surfaces are not necessarily disjoint. Therefore, we compute $\tau(Y)$ and $\tau(Y')$ directly from Proposition \ref{prop}, and just point out how $\tau(W)$ and $\tau(X)$ appear in this computation.  See the beginning of subsection \ref{conclusion} for more comments.
\end{rmk}
\begin{figure}[h]
\centering
\includegraphics [scale=0.5]{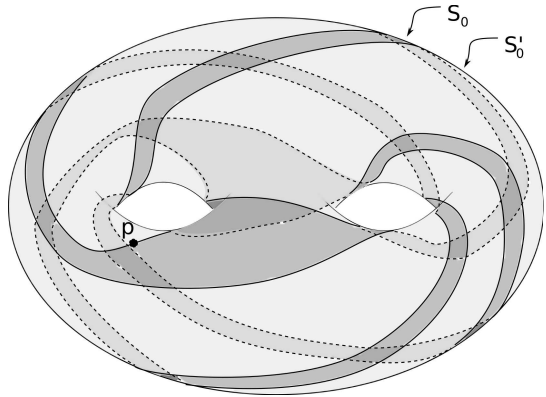}
\caption{The surfaces $S_0$ and $S'_0$ bounding a handlebody of genus two.}
\label{disjoint}
\end{figure}

 In order to specify the $R_\pm$ regions on $(Y,\ga)$ and $(Y',\ga')$, we fix an orientation of the knot and an orientation of $S^3$.  Suppose that these orientations are chosen so that  the union of $R_-(\ga)$ and $R_+(\ga')$ forms the visible side of the genus two surface which is depicted in Figure \ref{disjoint} for the case $n=0$.  
 
Recall that the sutured torsion of a manifold $(M,\ga)$ is defined using the pair of spaces $(M,R_-(\ga))$. Let $\tau^+(M,\ga)$ denote the sutured torsion computed using the same algorithm only with the pair of spaces $(M,R_+(\ga))$.  Fix an affine isomorphism $\iota \colon \Spin^c(M,\ga) \to H_1(M)$. Then, Proposition 2.14 of \cite{FJR10} gives a useful duality result, which says that, as elements of the group ring $\bZ[H_1(M)]$, the two torsion polynomials $\tau(M)$ and $\tau^+(M)$ are equivalent up to a reflection in the origin.  That is, $\tau(M) \doteq \sigma \circ \tau^+(M)$, where $\sigma$ is the linear extension of the inversion map $H_1(M) \to H_1(M)$ given by $h \mapsto h^{-1}$.  

\begin{rmk} \label{R+} In subsection \ref{computing prime}, we compute $\tau^+(Y')$ even though we write $\tau(Y')$.  Once computed, the polynomial $\tau^+(Y')$ is easily seen to be centrally symmetric, so $\tau^+(Y') \doteq \tau(Y')$ and we are justified in writing $\tau(Y')$ instead. 
\end{rmk}

%%%%%%%%%%%%%%%%%%%%%%%%%%%%%%%%%%%
\subsection{Computing $\tau(Y_n)$}
Take $\al$ and $\be$ to be the generators of $\pi_1(S_n,p)$ as depicted in Figure \ref{generators of S}.  Push these curves into the complement.  In particular, push them into $V\setminus B$; this operation amounts to considering the inclusion map $\kappa_* \colon \pi_1(R_-(\ga_n),p) \to \pi_1(Y_n,p)$ that occurs in the definition of the matrix $\Theta_{Y_n}$.
\begin{figure}[h]
\centering
\includegraphics [scale=0.45]{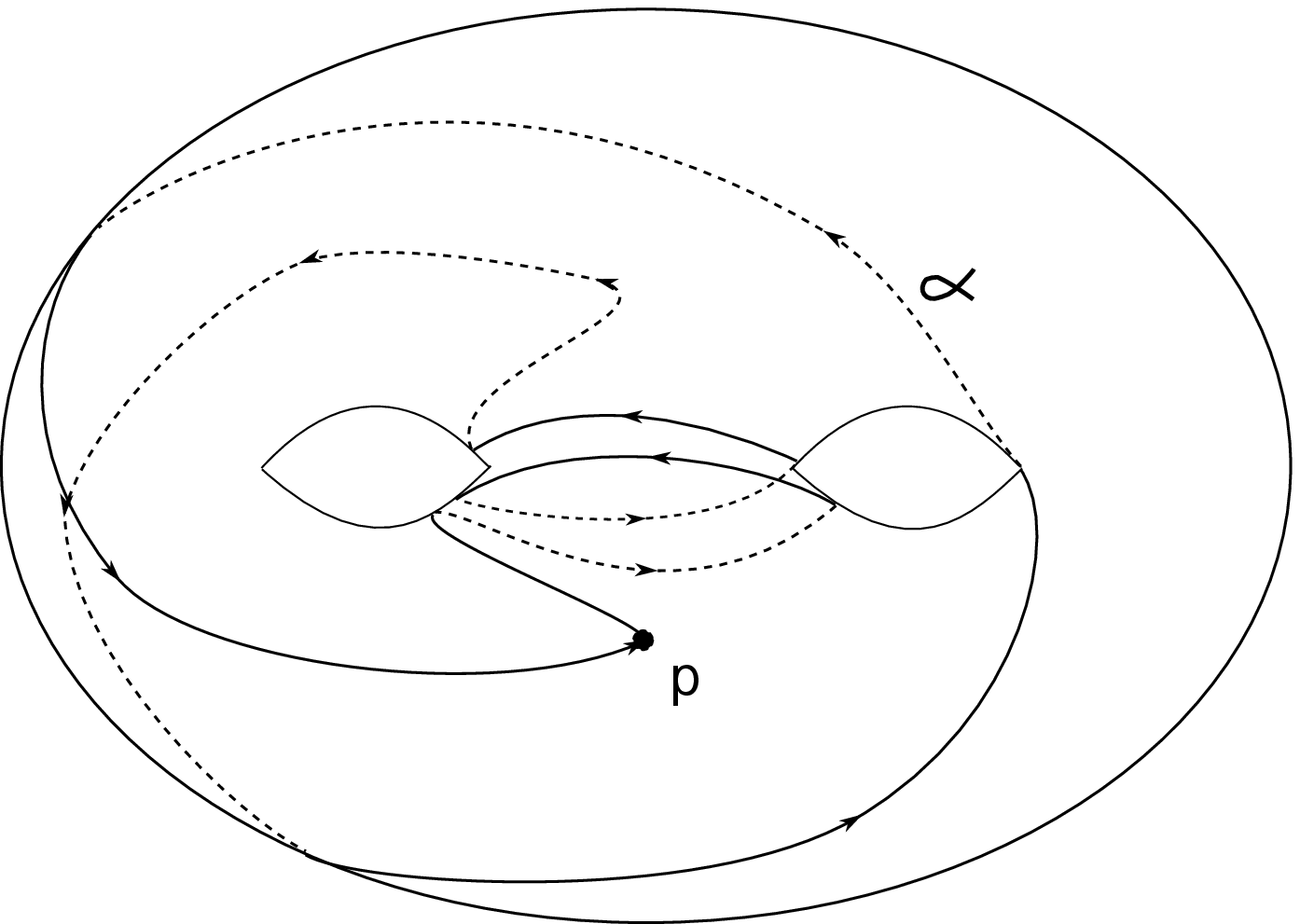}
\hspace{0.5cm}
\includegraphics [scale=0.45]{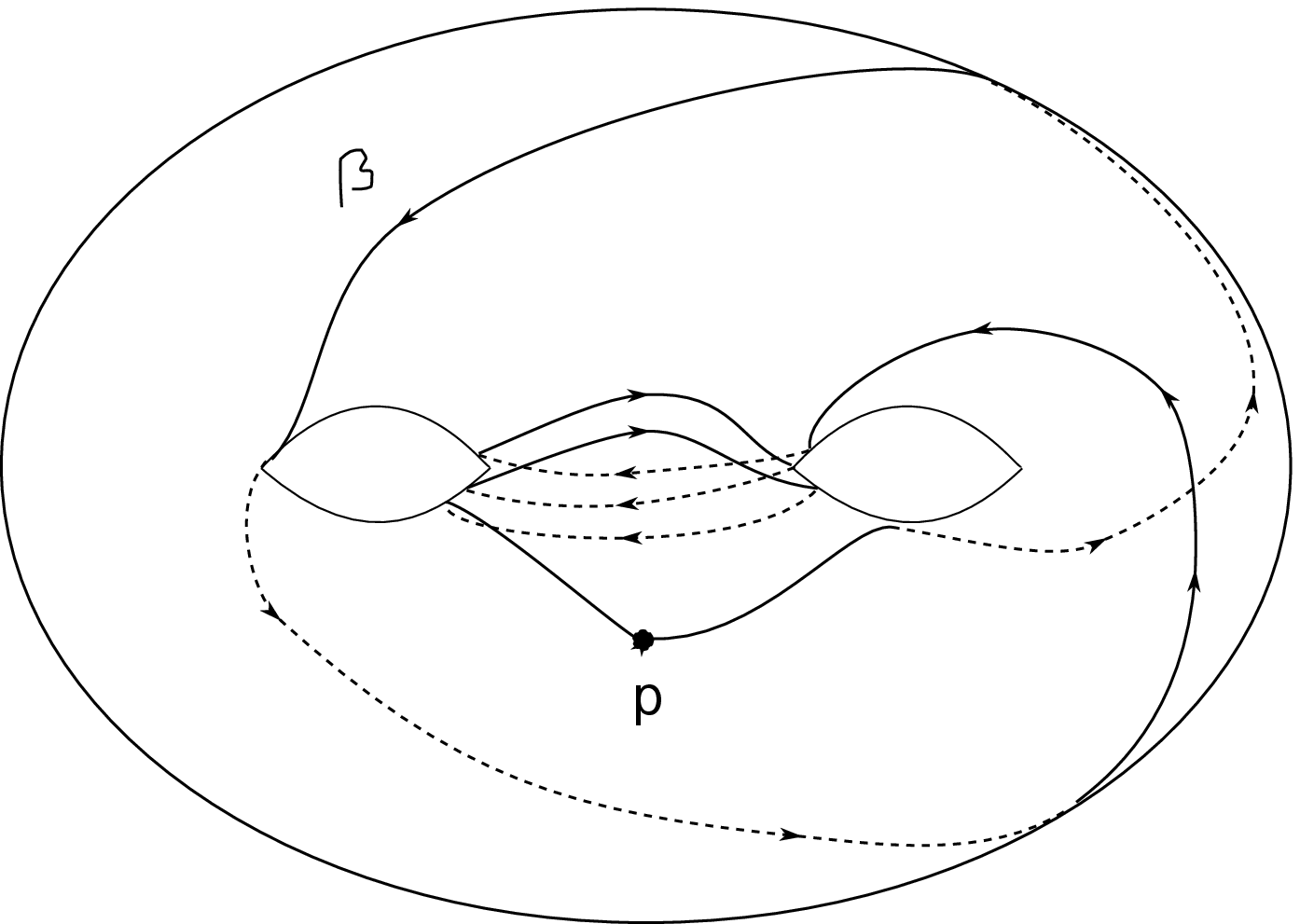}
\caption{The generators $\alpha$ and $\beta$ of $\pi_1(S_2,p)$.}
\label{generators of S}
\end{figure}
Next, read off the relations $\al=a(b^{-1}a)^{n}b$ and $\be=ba(ba^{-1})^{n}ba^{-1}$. So we have
\begin{gather*}
\al=(a b^{-1})^{n+1} b^2, \label{al'} \\ 
\be=ba(ba^{-1})^{n+1}. \label{be'}
\end{gather*}
 
It turns out that $\al$ and $\be$ are curves entirely given in the two generators $a,b$.  Therefore, their Fox derivatives with respect to $x$ are zero.   Denote by $r:=x^3b^{-2}a^{-2}$ the group relation of $\pi_1(Y_n,p)$.  So by Proposition \ref{prop},
\[
\tau(Y_n)\doteq \det \Theta_{Y_n}= \varphi \Bigl( \frac{\bdd r}{\bdd x}\Bigr) \cdot \det
\begin{pmatrix}
\varphi \bigl( \frac{\bdd \al}{\bdd a} \bigr)& \varphi \bigl(  \frac{\bdd \be}{\bdd a}\bigr)\\
\varphi \bigl( \frac{\bdd \al}{\bdd b} \bigr) &\varphi \bigl(   \frac{\bdd \be}{\bdd b}\bigr)
\end{pmatrix}.
\]
We have $\frac{\bdd r}{\bdd x}=1+x+x^2$ and
\begin{align*}
\frac{\bdd \al}{\bdd a} &= \frac{(ab^{-1})^{n+1}-1}{ab^{-1}-1},   &
\frac{\bdd \be}{\bdd a} &=b-baba^{-1}\frac{(ba^{-1})^{n+1}-1}{ba^{-1}-1},\\
\frac{\bdd \al}{\bdd b} &=-ab^{-1}\frac{(ab^{-1})^{n+1}-1}{ab^{-1}-1}+(ab^{-1})^{n+1}(1+b), & 
\frac{\bdd \be}{\bdd b}&= 1+ba\frac{(ba^{-1})^{n+1}-1}{ba^{-1}-1}. 
\end{align*}
Now compute the polynomial 
$
q_n(a,b):=\det \begin{pmatrix}
 \frac{\bdd \al}{\bdd a}&   \frac{\bdd \be}{\bdd a}\\
\frac{\bdd \al}{\bdd b}  &  \frac{\bdd \be}{\bdd b}
\end{pmatrix}
$ as a polynomial in $\bZ[H]$, where $H:=\bZ\langle a \rangle \oplus \bZ\langle b \rangle$. Then 
\begin{align*}
q_n(a,b) &=-\frac{b}{a-b} \left( 1 + a+a b + ab^2 - \left( \frac{a}{b}\right)^{n+1}- b\left( \frac{a}{b}\right)^{n+1} - b^2 \left( \frac{a}{b}\right)^{n+1} - a b^2 \left( \frac{a}{b}\right)^{n+1}\right) \\
   &\doteq \frac{b}{a-b} \left(a^{n+1} (1+b+b^2 +ab^2) - b^{n+1}(1+a+ab+ab^2)\right).
\end{align*}  
This polynomial appears again when we compute $\tau(Y'_n)$; see the beginning of subsection \ref{conclusion} for an explanation.  Note that 
\begin{equation} \label{rec}
q_{n+1}(a,b)\doteq a \cdot q_{n}(a,b)+ b^{n+2} (1+a+ab+ab^2).
\end{equation}
Recall from Remark \ref{ab} how to abelianize $\pi_1(Y_n,p)$.  To obtain the sutured torsion we need to calculate
\begin{equation} \label{torprime}
\tau(Y_n) \doteq \varphi \bigl(q_n(a,b) \cdot (1+x+x^2)\bigr),
\end{equation}
 which yields a polynomial in $\bZ[a^{\pm 1},u^{\pm 1}]$.  For a general $n \geq 0$, we have
 \begin{equation} \label{tor}
\tau(Y_n)\doteq \frac{(1+u^2+u^4)}{a^2-u^3} \left[ (a^2+u^3 a+u^6 a +u^6)a^{2n+2} - u^{3n+3}(a^3+a^2+u^3 a^2+u^6 a)  \right].
\end{equation}

As $q_0(a,b)=1+ab^2$ has all positive coefficients, it follows from \eqref{rec} that all the coefficients of $q_n(a,b)$ are of the same sign.  The recursive equation \eqref{rec} together with \eqref{torprime} implies that the coefficients of $\tau(Y_n)$ add up to $6+12n$, which is exactly  the top term of $\De_{K_n}(t)$, as it should be by Lemma\,6.4 of \cite{FJR10}.

%%%%%%%%%%%%%%%%%%%%%%%%%%%%
\subsection{Computing $\tau(Y'_n)$} \label{computing prime}
We follow a similar procedure to compute the sutured torsion of $Y'_n$.   Take $\al$ and $\be$ to be the generators of $\pi_1(S'_n,p)$ as depicted in Figure \ref{generators of S'}.  As before, push the curves into $V \setminus B$; this operation amounts to considering the inclusion map $\kappa_* \colon \pi_1(R_+(\ga_n'),p) \to \pi_1(Y_n',p)$.  Therefore, what we refer to as $\tau(Y_n')$ below is actually $\tau^+(Y_n')$; see Remark \ref{R+}.
\begin{figure}[h]
\centering
\includegraphics [scale=0.45]{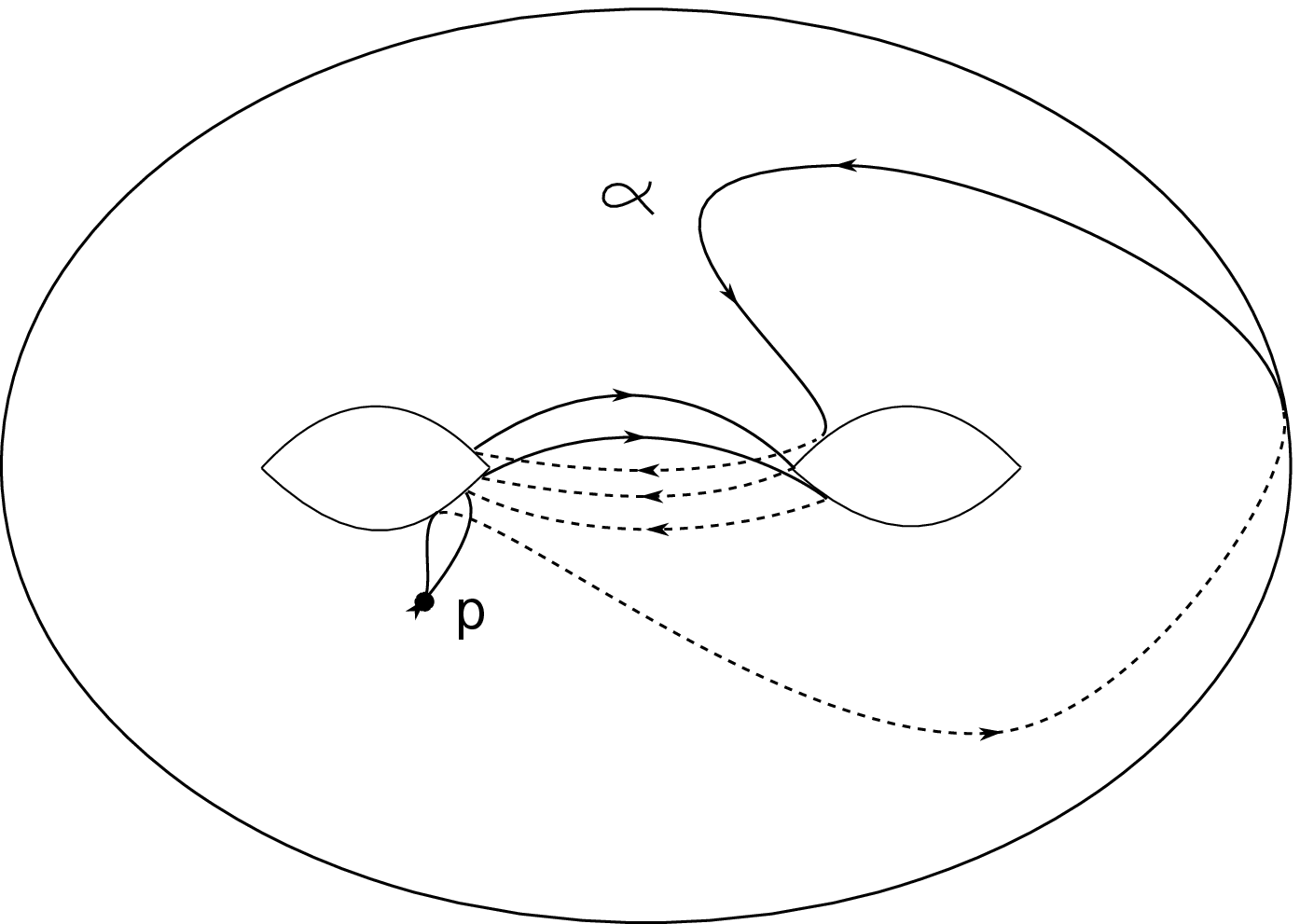}
\hspace{0.5cm}
\includegraphics [scale=0.45]{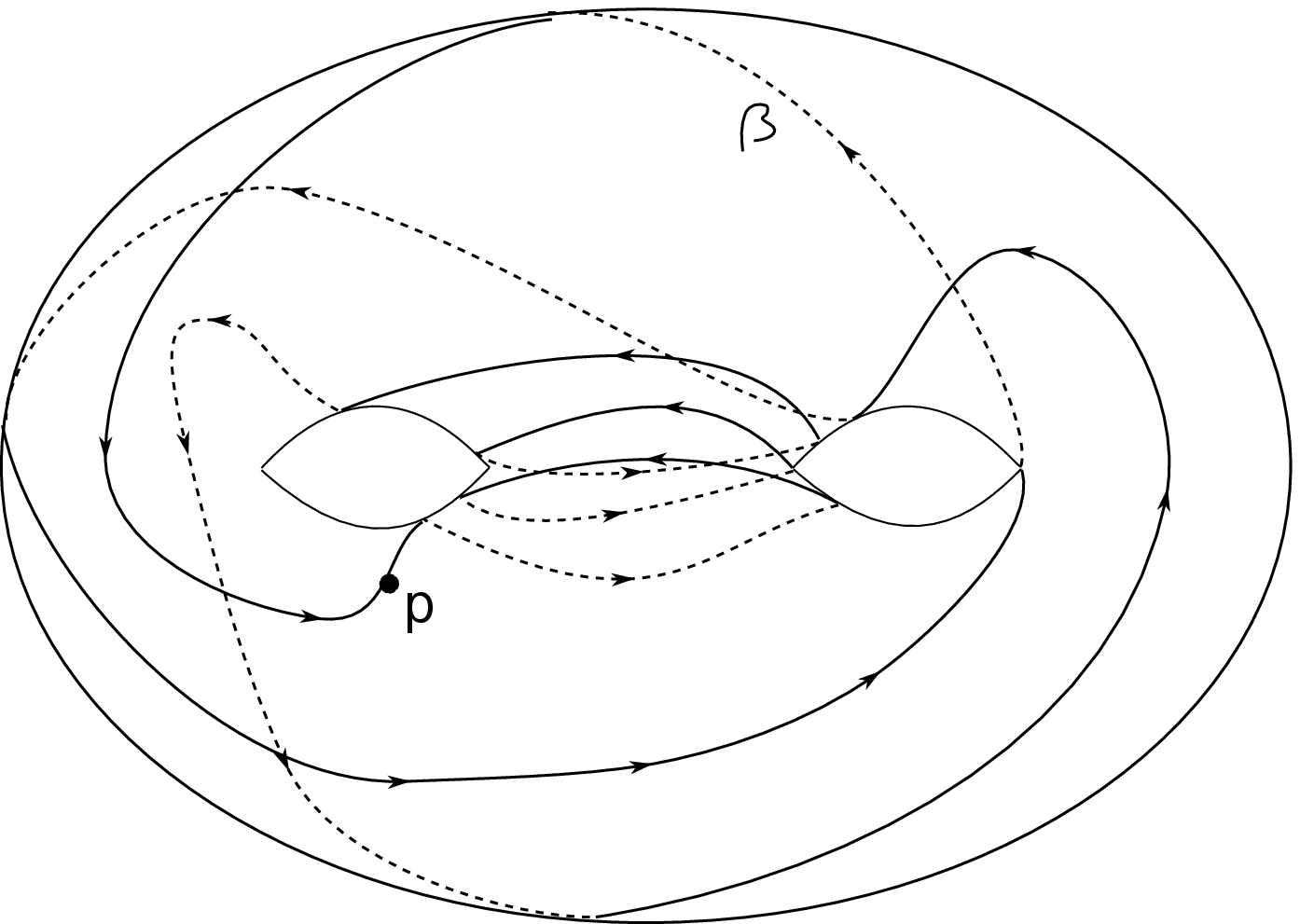}
\caption{The generators $\alpha$ and $\beta$ of $\pi_1(S'_2,p)$.}
\label{generators of S'}
\end{figure}
Read off the relations $\al=ab(a^{-1}b)^{n}a^{-1}$ and \linebreak $\be=ab^{-1}(ab^{-1})^{n}ab^2$.  So we have
\begin{gather*}
\al=a(ba^{-1})^{n+1} \label{al}, \\ 
\be=(ab^{-1})^{n+1}ab^2 \label{be}.
\end{gather*}

Denote by $r:=x^3b^{-2}a^{-1}b^{-1}$ the group relation.  Even though $\pi_1(Y'_n,p)$ is a free group, we choose to compute $\tau(Y'_n)$ in the presentation with three generators and one relation, in order to exhibit similarities with $\tau(Y_n)$.  As before, the Fox derivatives of $\al$ and $\be$ with respect to $x$ are both zero, so again the only relevant Fox derivative of $r$ is $\frac{\bdd r}{\bdd x}=1+x+x^2$.  The other Fox derivatives are:

\begin{align*}
\frac{\bdd \al}{\bdd a} &= 1-aba^{-1} \frac{(ba^{-1})^{n+1}-1}{ba^{-1}-1},  &
\frac{\bdd \be}{\bdd a} &=\frac{(ab^{-1})^{n+1}-1}{ab^{-1}-1} + (ab^{-1})^{n+1}, \\
\frac{\bdd \al}{\bdd b} &=a\frac{(ba^{-1})^{n+1}-1}{ba^{-1}-1}, & 
\frac{\bdd \be}{\bdd b}&=-ab^{-1}\frac{(ab^{-1})^{n+1}-1}{ab^{-1}-1}+(ab^{-1})^{n+1} a(1+b) .
\end{align*}
Computing the polynomial 
$
q_n'(a,b) := \det \begin{pmatrix}
 \frac{\bdd \al}{\bdd a}&   \frac{\bdd \be}{\bdd a}\\
\frac{\bdd \al}{\bdd b}  &  \frac{\bdd \be}{\bdd b}
\end{pmatrix} \in \bZ[H]
$
we find that $q'_n(a,b)\doteq q_n(a,b)$.  Therefore, the difference between the two sutured torsion invariants comes from the abelianization maps.

Recall that $a= x^3b^{-3} \in H_1(Y_n';\bZ)$ and make this substitution for $a$ in the expression 
\[
\tau(Y'_n)\doteq \varphi \bigl( q'_n(a,b) \cdot (1+x+x^2) \bigr),
\]
to find $\tau(Y'_n)$ as a polynomial of $\bZ[b^{\pm 1}, x^{\pm 1}]$. For a general $n \geq 0$, we have
\begin{equation} \label{tor'}
\tau(Y'_n) \doteq \frac{(1+x+x^2)}{x^3-b^4} \left[ x^{3n+3} (b^5+ b^4+b^3+ x^3 b^2)-b^{4n+4}(b^3+x^3 b^2+x^3 b+x^3) \right]. 
\end{equation}
The same argument as before shows that the coefficients of $\tau(Y'_n)$ add up to $6+12n$, as expected.

%%%%%%%%%%%%%%%%%%%%%%%%%%%%
\subsection{Conclusion} \label{conclusion}
The polynomials $q(a,b):\doteq q_n(a,b) \doteq q_n'(a,b)$ and $(1+x+x^2)$ appear in the computations of $\tau(Y_n)$ and $\tau(Y_n')$ .  Indeed, in both cases  the sutured torsion is computed by abelianizing an expression of the form $q(a,b) \cdot (1+x+x^2)$.  With regards to Remark \ref{rmk disjoint} this phenomenon is not surprising.  In particular, from the work we have already done, it is not hard to see that
\begin{align*}
\tau(W) &\doteq q(a,b) \hspace{0.68cm} \in H_1(W)\cong\bZ[a^{\pm1},b^{\pm1}], \\
\tau(X)  &\doteq 1+x+x^2 \in H_1(X).
\end{align*}
For us, these observations are useful inasmuch as they verify our computations.  In general, if the Seifert surfaces are not disjoint, then such a verification is not at our convenience.

\begin{rmk}
Note that we have just shown that two vertices of the {\it Kakimizu complex} \cite{Ka92} of $K_n$ have associated to them different sutured torsions, and hence different sutured Floer homology groups.
\end{rmk}

We claim that the sutured torsion invariants $\tau(Y_n)$ and $\tau(Y_n')$ given in \eqref{tor} and \eqref{tor'} are not equivalent for all $n \geq 0$.  For $n=0$, we have
\begin{align*}
\tau(Y_0) &\doteq(a+u^6)(1+u^2+u^4) \in \bZ[a^{\pm 1},u^{\pm 1}], \\
\tau(Y_0')  &\doteq(b+x^3)(1+x+x^2)  \hspace {0.23cm} \in \bZ[b^{\pm 1},x^{\pm 1}].
\end{align*}
Inspection reveals that there is no affine isomorphism $H_1(Y_0)\to H_1(Y_0')$, taking one sutured torsion polynomial onto the other. See Figure \ref{support 1} for the supports.
\begin{figure}[h]
\centering
\includegraphics [scale=0.45]{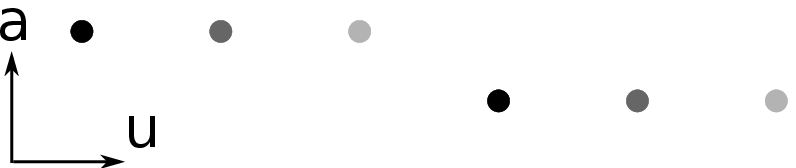} \hspace{2cm}
\includegraphics [scale=0.45]{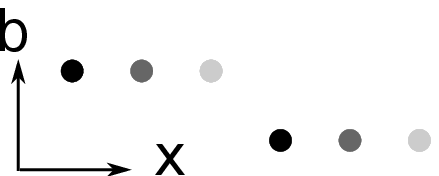}
\caption{Left: the support of $\tau(Y_0)$. Right: the support of $\tau(Y_0')$.}
\label{support 1}
\end{figure}

\begin{rmk}
The three different shades of grey in the support of the polynomials indicate the ``shift'' of $q_n(a,b)$ by $
1+u^2+u^4$ and of $q_n'(a,b)$ by $1+x+x^2$.
\end{rmk}

For $n=1$, the relations are 
\begin{align*}
\tau(Y_1)&\doteq(a^3 + a u^3 + a^2 u^3 + a u^6 + a^2 u^6 + u^9)(1+u^2+u^4) \in \bZ[a^{\pm 1},u^{\pm 1}], \\
\tau(Y_1')&\doteq(b^5 + b x^3 + b^2 x^3 + b^3 x^3 + b^4 x^3 + x^6)(1+x+x^2)  \hspace{0.2cm} \in \bZ[b^{\pm 1},x^{\pm 1}]. 
\end{align*}
Figure \ref{support 2} indicates that the support of $\tau(Y'_1)$ contains a $3 \times 4$ parallelogram, which cannot be found in the support of $\tau(Y_1)$.  Therefore, there too is no affine isomorphism taking one to the other.
\begin{figure}[h]
\centering
\includegraphics [scale=0.45]{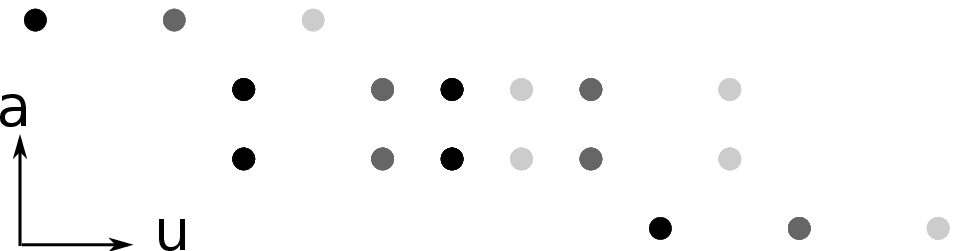} \hspace{2cm}
\includegraphics [scale=0.45]{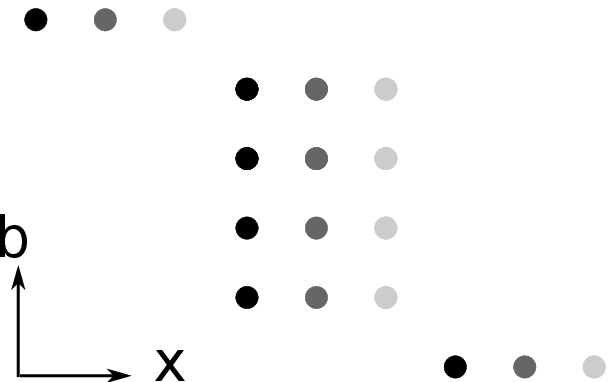}
\caption{Left: the support of $\tau(Y_1)$.  Right: the support of  $\tau(Y_1')$.}
\label{support 2}
\end{figure}

Lastly, for a general $n > 0$, the supports of the sutured torsion follows the pattern from $n=1$, with another parallelogram containing twelve points being added for each increase of $n$ by one; see Figure \ref{support n}.  The same argument as for $n=1$ shows that there is no affine isomorphism taking one torsion polynomial onto another, and thus $\tau(Y_n') \not \sim \tau(Y_n')$. 

\begin{rmk}
For $n>0$, observe that the convex hulls of the supports in both cases are hexagons, only with sides of different length.  For $\tau(Y_n)$ the sides of the convex hull are of slope $-2/3,-1/6,0$ and length $n,1,4$, respectively.  On the other hand, for $\tau(Y'_n)$ the sides of the convex hull are of slope $-4/3,-1/3,0$ and length $n,1,2$, respectively.  So alternatively, we can argue that no affine isomorphism taking one convex hull onto the other.  For $n=-1$, see the latter part of the proof of Theorem \ref{thm}.  For $n<-1$, the sutured torsion invariants can be computed similarly, and an analogous argument can be made to show that they are nonequivalent.
\end{rmk} 

\begin{figure}[h]
\centering
\includegraphics [scale=0.45]{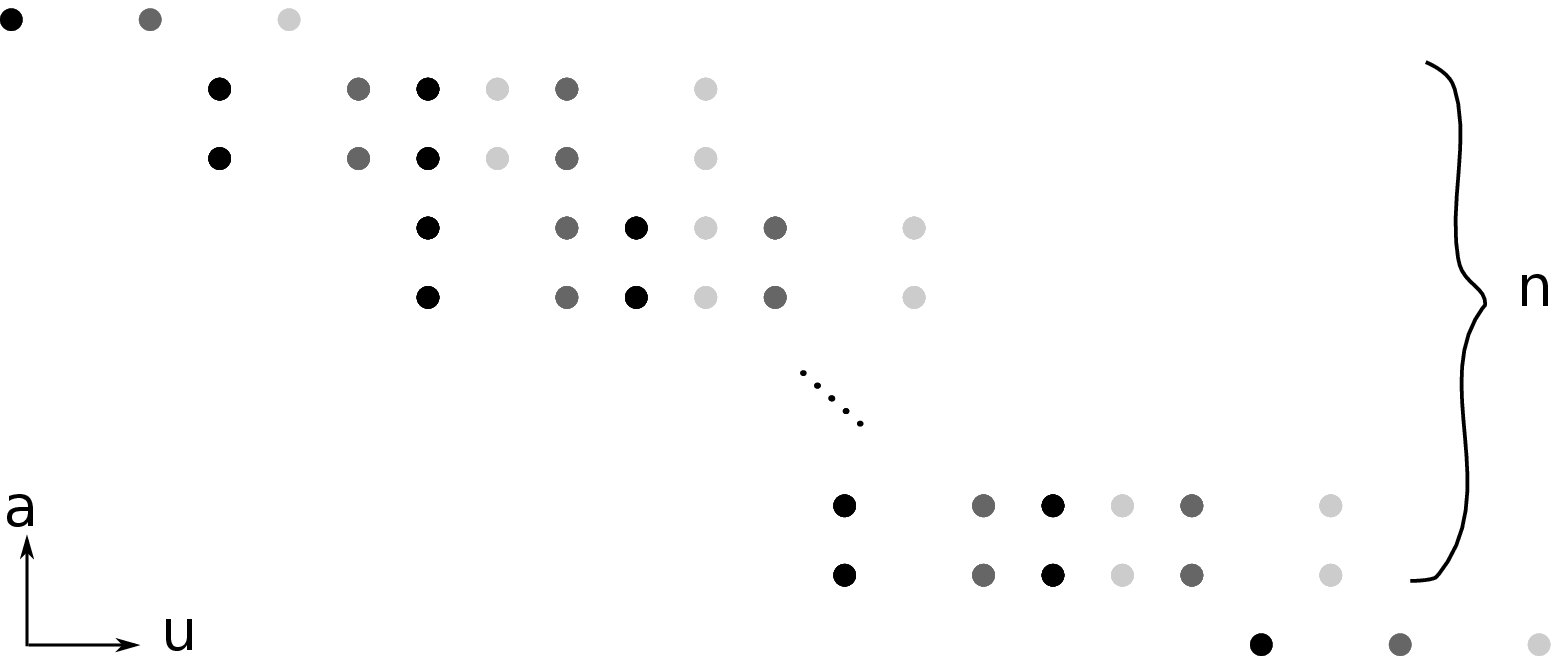} \hspace{1.5cm}
\includegraphics [scale=0.45]{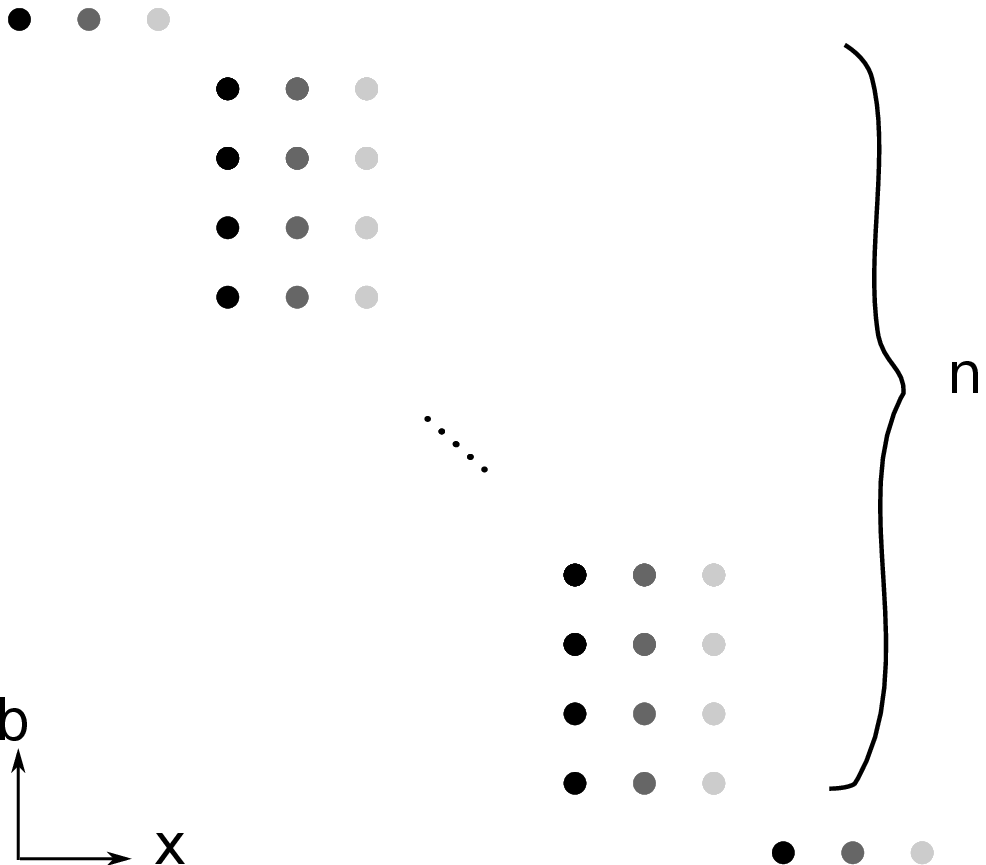}
\caption{Left: the support of $\tau(Y_n)$.  Right: the support of  $\tau(Y_n')$.}
\label{support n}
\end{figure}

\begin{proof} [Proof of Theorem \ref{thm}]
Let $K:=K_0$, and set $R_1:=S_0$ and $R_2:=S_0'$.  Then $\tau(S^3(R_1)) \not \sim \tau(S^3(R_2))$.  Therefore, $SFH(S^3(R_1)) \not \cong SFH(S^3(R_2))$ as $\Spin^c$-graded groups.  For any $n>0$, the knots $K_n$ together with pairs of minimal genus Seifert surfaces $(S_n,S_n')$ have the same property.

For notational convenience, in the remainder of the proof we suppress any references to the sutures of manifolds, as they are clearly understood. 

To prove the statement about polytopes, consider the knot $K_{-1}$. 
\begin{figure}[h]
\centering
\includegraphics [scale=0.5]{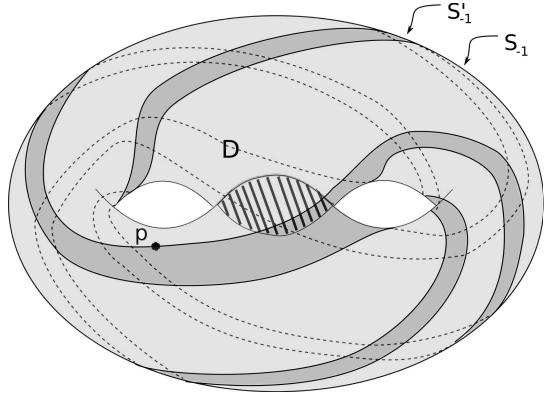}
\caption{The surfaces $S_{-1}$ and $S'_{-1}$, together with a decomposing disc $D$ for $X$.}
\label{disjoint -1}
\end{figure}

See  Figure \ref{disjoint -1} for an analogue of Figure \ref{disjoint} in the case of $K_{-1}$.  Since our torsion computations hold for $n=-1$, we easily compute that $q_{-1}(a,b)\doteq 1+b$, and that the two sutured torsion polynomials are given by the following polynomials:
\begin{align*}
\tau(Y_{-1}) & \doteq (a+u^3)(1+u^2+u^4), \\
\tau(Y'_{-1}) &\doteq (1+b)(1+x+x^2).
\end{align*}

Next, observe that the disc $D_a$ from Figure \ref{Lyoncomplement} (right) gives a product decompositions of $W_{-1}$.   Similarly, the disc $D$ in Figure \ref{disjoint -1} gives a product decomposition of $X_{-1}$.  In particular, the two handlebodies are product decomposed into solid tori with two sutures each;  in the notation of Proposition \ref{torus prop}, we have
\begin{gather*}
W_{-1} \leadsto^{D_a} T(2,1;2), \\
X_{-1} \leadsto^{D} T(3,4;2).
\end{gather*}
Lemma \ref{product decomp} and Proposition \ref{torus prop} imply that 
\begin{align*}
SFH(W_{-1}) &=SFH(T(2,1;2)) =\bZ^2, \\
SFH(X_{-1})&=SFH(T(3,4;2)) =\bZ^3.
\end{align*}
Juh\'asz's {\it decomposition formula} \cite[Prop.\,8.6]{Ju08} implies that $SFH(Y_{-1})$ and \linebreak $SFH(Y'_{-1})$ are isomorphic to $\bZ^6$.  Hence, the sutured torsion $\tau(Y_{-1})$ is the image of the support $S(Y_{-1})$ under some affine isomorphism $\iota \colon \Spin^c(Y_{-1}) \to H_1(Y_{-1};\bZ)$.  Similarly for $\tau(Y_{-1}')$.  By Remark \ref{polytope rmk}, it follows that we can now easily compare the polytopes: Figure \ref{support -1} shows that $P(Y_{-1})$ is a parallelogram with ratio of side lengths 1:4, whereas $P(Y'_{-1})$ is a parallelogram with ratio of side lengths 1:2.  Therefore, the polytopes are different.
\begin{figure}[h]
\centering
\includegraphics [scale=0.45]{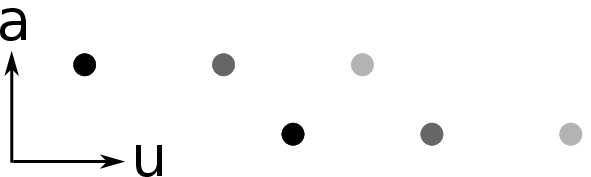} \hspace{2cm}
\includegraphics [scale=0.45]{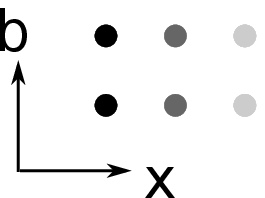}
\caption{Left: the support of $\tau(Y_{-1})$. Right: the support of $\tau(Y_{-1}')$.}
\label{support -1}
\end{figure}

\end{proof}

\begin{rmk}
We see from the sutured torsion polynomials $\tau(Y_{-1})$ and $\tau(Y_{-1}')$ that \linebreak $SFH(Y_{-1},\ga_{-1})$ and $SFH(Y_{-1}',\ga_{-1}')$ are supported in a single $\bZ/2$ homological grading, and we know that both groups are torsion-free.  Therefore, the proof of Theorem \ref{thm} shows that the sutured torsion and the sutured Floer polytope can distinguish between Seifert surfaces whose complementary manifolds are {\it sutured L-spaces} \cite[Def.\,1.1]{FJR10}.
\end{rmk}

%------ BIBLIOGRAPHY ---------

%%%%%%%%%%%%%%%%%%%%%%%%%%%%%%%%%%%

\vspace{1cm}

{\sc \noindent Mathematics Institute, Zeeman Building, University of Warwick, UK.}
\\
E-mail address: i.altman@warwick.ac.uk

\end{document}